\newtheorem{theorem}{Theorem}[section]
\newtheorem{lemma}[theorem]{Lemma}
\newtheorem{corollary}[theorem]{Corollary}
\newtheorem{obs}[theorem]{Observation}
\theoremstyle{definition}
\newtheorem{definition}[theorem]{Definition}
\newtheorem{example}[theorem]{Example}
\newtheorem{notation}[theorem]{Notation}
\theoremstyle{remark}
\newtheorem{remark}[theorem]{Remark}
\numberwithin{equation}{section}
\DeclareMathOperator{\match}{match}
\DeclareMathOperator{\tr}{tr}
\DeclareMathOperator{\jac}{Jac}
\DeclareMathOperator{\atA}{\;{\rule[-3.6mm]{.1mm}{8mm}}_{A}}
\renewcommand{\i}{\mathrm{i}}
\begin{document}
\title{Construction of real skew-symmetric matrices from interlaced spectral data, and graph}
\author{Keivan Hassani Monfared\hspace{15pt} Sudipta Mallik }
\affil{\small Department of Mathematics, Western Illinois University, 476 Morgan Hall, 1 University Circle, Macomb, IL 61455, USA\\ Department of Mathematics \& Statistics, Northern Arizona University, 805 S. Osborne Dr. PO Box: 5717, Flagstaff, AZ 86011, USA\\ E-mail addresses: k-hassanimonfared@wiu.edu and Sudipta.Mallik@nau.edu}
\date{}
\maketitle   

\renewcommand{\thefootnote}{\fnsymbol{footnote}} 
\footnotetext{\emph{2010 Mathematics Subject Classification. 05C50,65F18\\ Keywords: Skew-Symmetric Matrix, Graph, Tree, Structured Inverse Eigenvalue Problem, The Duarte Property, Cauchy Interlacing Inequalities, The Jacobian Method.}}    
\renewcommand{\thefootnote}{\arabic{footnote}} 

\begin{abstract}
A 1989 result of Duarte asserts that for a given tree $T$ on $n$ vertices, a fixed vertex $i$, and two sets of distinct real numbers $L,M$ of sizes $n$ and $n-1$, respectively, such that $M$ strictly interlaces $L$, there is a real symmetric matrix $A$ such that graph of $A$ is $T$, eigenvalues of $A$ are given by $L$, and eigenvalues of $A(i)$ are given by $M$. In 2013, a similar result for connected graphs was published by Hassani Monfared and Shader, using the Jacobian method. Analogues of these results are presented here for real skew-symmetric matrices whose graphs belong to a certain family of trees, and all of their supergraphs.
\end{abstract}

\section{Introduction}

Inverse eigenvalue problems (IEP's) have long been studied because of many applications that they have in various areas of science and engineering \cite{boleyGolub87, chu98}. That is, to find a matrix in a certain family of matrices with prescribed eigenvalues, eigenvectors, or both. In particular, structured inverse eigenvalue problems (SIEP's) have received a lot of attention \cite{chuGolub02}. For example, one might be interested in finding matrices which have prescribed eigenvalues where the solution matrix has a certain zero-nonzero pattern. In this paper we study an SIEP which asks about the existence of a real skew-symmetric matrix with a specific zero-nonzero pattern where the eigenvalues of the matrix and the eigenvalues of a principal submatrix of it are prescribed and are distinct. We shall give a precise formulation of the problem (which we call the $\lambda-\mu$ skew-symmetric SIEP), and a solution when the structure of the matrix is defined by a family of trees and their supergraphs.

Cauchy interlacing inequalities \cite{cauchy} assert that the eigenvalues of a real symmetric matrix and those of a principal submatrix of it satisfy certain inequalities. Namely, if $A$ is an $n\times n$ real symmetric matrix with eigenvalues $\lambda_1 \leq \lambda_2  \leq \cdots \leq \lambda_n$, and $B$ is an $(n-1)\times (n-1)$ principal submatrix of $A$ with eigenvalues $\mu_1 \leq \mu_2 \leq \cdots \leq \mu_{n-1}$. Then
\begin{equation}\label{cauchysymm}
\lambda_1\leq \mu_1\leq \lambda_2\leq\cdots \leq \mu_{n-1}\leq \lambda_n.
\end{equation} 

Note that Cauchy interlacing inequalities hold for any Hermitian matrix $A$ in general. The {\it spectrum} of a square matrix $A$, denoted by $\sigma(A)$, is the set of eigenvalues of $A$.  For the preceding $A$ and $B$, we say that $\sigma(B)$ {\it interlaces}  $\sigma(A)$. If the inequalities in (\ref{cauchysymm}) are all strict, we say $\sigma(B)$ \textit{strictly interlaces} $\sigma(A)$. 

Here we introduce similar inequalities to Cauchy interlacing inequalities for skew-symmetric matrices. Since all the eigenvalues of any skew-symmetric matrix are purely imaginary numbers, we define an ordering on the imaginary axis of the complex plane. Let $\mathrm{i}$ denote the complex number $\sqrt{-1}$ and $\i\mathbb R=\{\i a: a\in \mathbb R\}$. For $a,b \in \i \mathbb{R}$ we say $a \leq b$ whenever $-\i a \leq -\i b$, and the equality holds if and only if $a=b$. Let $\mathcal S=\{a_1,\ldots, a_n\}$  be a subset of $\i\mathbb R$. $\mathcal S$ is said to be \textit{presented in increasing order} if $ a_1 \leq  a_2 \leq \cdots \leq  a_n$. Throughout this article we always present spectra of skew-symmetric matrices in increasing order. If $\mathcal S=\{a_1,\ldots,a_n\}$ is presented in increasing order, then $a_1$ is said to be the \textit{smallest element} of $\mathcal S$, $a_2$ is said to be the \textit{second smallest element} of $\mathcal S$, and so on.

Let $\mathcal A=\{\lambda_1,\ldots,\lambda_n\}$ and $\mathcal B=\{\mu_1,\ldots,\mu_{n-1}\}$ be subsets of $\i \mathbb{R}$, presented in increasing order. $\mathcal B$ is said to \textit{interlace} $\mathcal A$ if  $\lambda_1 \leq \mu_1 \leq \lambda_2 \leq \cdots \leq \mu_{n-1} \leq \lambda_n$. Similarly $\mathcal B$ is said to \textit{strictly interlace} $\mathcal A$ if  $\lambda_1 < \mu_1 < \lambda_2< \cdots < \mu_{n-1}< \lambda_n$. Now Cauchy interlacing inequalities for skew-symmetric matrices can be stated as follows.

\begin{theorem}[Cauchy interlacing inequalities for skew-symmetric matrices] Let $A$ be an $n\times n$ real skew-symmetric matrix and $B$ be an $(n-1)\times (n-1)$ principal submatrix of $A$. Then $\sigma(B)$ interlaces $\sigma(A)$.
\end{theorem}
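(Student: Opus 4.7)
The plan is to reduce the skew-symmetric case to the classical Hermitian Cauchy interlacing inequalities already quoted in \eqref{cauchysymm}, by multiplying by $-\i$.

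First I would observe that if $A$ is real and skew-symmetric, then the matrix $H:=-\i A$ is Hermitian. Indeed, $A^{*}=\overline{A}^{T}=A^{T}=-A$, so $H^{*}=(-\i A)^{*}=\i A^{*}=\i(-A)=-\i A=H$. Next, I would verify that the spectral correspondence is compatible with the ordering defined in the excerpt: if $Av=\lambda v$ with $\lambda=\i a$ for some $a\in\mathbb R$, then $Hv=-\i\lambda v=av$, so the eigenvalues of $H$ are precisely $\{-\i\lambda : \lambda\in\sigma(A)\}\subset\mathbb R$. By definition of the ordering on $\i\mathbb R$, writing $\sigma(A)=\{\lambda_{1},\ldots,\lambda_{n}\}$ in increasing order is the same as writing $\sigma(H)=\{-\i\lambda_{1},\ldots,-\i\lambda_{n}\}$ in ordinary increasing real order.

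Second, I would note that taking a principal submatrix commutes with scalar multiplication: if $B$ is the $(n-1)\times(n-1)$ principal submatrix of $A$ obtained by deleting row and column $i$, then $-\i B$ is exactly the corresponding principal submatrix of $H=-\i A$, and $-\i B$ is Hermitian of size $n-1$. Its real eigenvalues $-\i\mu_{1}\le\cdots\le-\i\mu_{n-1}$ correspond in the same way to the $\i\mathbb R$-increasing listing $\mu_{1}\le\cdots\le\mu_{n-1}$ of $\sigma(B)$.

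Third, I would apply the classical Cauchy interlacing inequalities \eqref{cauchysymm} to $H$ and its principal submatrix $-\i B$, obtaining
\begin{equation*}
-\i\lambda_{1}\le -\i\mu_{1}\le -\i\lambda_{2}\le\cdots\le -\i\mu_{n-1}\le -\i\lambda_{n}.
\end{equation*}
Translating back through the definition $a\le b\iff -\i a\le -\i b$ on $\i\mathbb R$ immediately yields $\lambda_{1}\le\mu_{1}\le\lambda_{2}\le\cdots\le\mu_{n-1}\le\lambda_{n}$, i.e.\ $\sigma(B)$ interlaces $\sigma(A)$.

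There is no serious obstacle here; the proof is essentially a bookkeeping exercise once one notices the Hermitian--skew-Hermitian correspondence $A\mapsto -\i A$. The only point that requires a little care is making sure that the ordering convention introduced on $\i\mathbb R$ lines up with the usual real ordering after the multiplication by $-\i$, which the sign choice $H=-\i A$ (rather than $\i A$) arranges cleanly.
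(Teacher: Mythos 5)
Your proof is correct and follows exactly the same route as the paper's: reduce to the Hermitian case by passing to $-\i A$, apply the classical Cauchy interlacing inequalities to $-\i A$ and its principal submatrix $-\i B$, and translate back through the ordering on $\i\mathbb R$. You spell out a few verifications (that $-\i A$ is Hermitian, that submatrix-taking commutes with scalar multiplication) that the paper states without comment, but the argument is the same.
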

\begin{proof}
Let $A$ be an $n\times n$ real skew-symmetric matrix and $B$ be an $(n-1)\times (n-1)$ principal submatrix of $A$. Let $\sigma (A)=\{\lambda_1,\ldots,\lambda_n\}$ and $\sigma (B)=\{\mu_1,\ldots,\mu_{n-1}\}$.

Since $A$ is a real skew-symmetric matrix, $-\i A$ is an $n\times n$ Hermitian matrix and $-\i B$ is an $(n-1)\times (n-1)$ Hermitian matrix, principal submatrix of $-\i A$. Also $\sigma (-\i A)=\{-\i \lambda_1,\ldots, -\i \lambda_n \}$ and $\sigma (-\i B)=\{-\i \mu_1, \ldots, -\i \mu_{n-1}\}$. Now by \ref{cauchysymm} for  $-\i A$ and $-\i B$, we have $-\i \lambda_1 \leq -\i \mu_1 \leq -\i \lambda_2 \leq \cdots \leq -\i \mu_{n-1}\leq -\i \lambda_n$. Thus $\sigma(B)$ interlaces $\sigma(A)$.
\end{proof}

Let $A=[a_{i,j}]$ be an $n\times n$  real symmetric or skew-symmetric matrix. We say $A$ is of order $n$, and denote it by $|A|=n$.  The graph of $A$, denoted by $G(A)$, has the vertex set $\{1,2,\ldots,n\}$ and the edge set  $\{\{i,j\}: a_{i,j}\neq 0, 1\leq i<j\leq n\}$. $S(G)$ denotes the set of all real symmetric matrices whose graph is $G$. Similarly $S^-(G)$ denotes the set of all real skew-symmetric matrices whose graph is $G$.

For a vertex $v$ of $G$, the set of all vertices of $G$ that are adjacent to $v$ is denoted by $N(v)$. For a vertex $w$ of a tree $T$, $T(w)$ denotes the forest obtained from $T$ by deleting the vertex $w$. If $v$ is a neighbor of $w$ in $T$, then $T_v(w)$ denotes the connected component of $T(w)$ having $v$ as a vertex. Note that $T_v(w)$ is a tree. For $A$ in $S(T)$ or $S^-(T)$, $A(w)$ denotes the principal submatrix of $A$ corresponding to $T(w)$ and $A_v(w)$ denotes the principal submatrix of $A$ corresponding to $T_v(w)$.  The graph obtained from $T_v(w)$ by deleting vertex $v$ is denoted by $T_{v'}(w)$ and $A_{v'}(w)$ denotes the principal submatrix of $A_v(w)$ corresponding to $T_{v'}(w)$. Also, for any matrix $A$, $C_A(x)$ denotes the characteristic polynomial of $A$.

\begin{figure}[h]
\begin{center}
\begin{tikzpicture}[node distance=1.5cm,colorstyle/.style={circle, draw=black!100,fill=black!100, thick, inner sep=0pt, minimum size=2 mm}]
 \node[colorstyle,label=above:$6$] (1) {};
 \node[colorstyle,label=above:$2$] (2) [right of=1] {};
 \node[colorstyle,label=above right:$4$] (4) [below right of=2] {};
 \node[colorstyle,label=above:$5$] (5) [left of=1] {};
 \node[colorstyle,label=left:$3$] (3) [below of=1] {};
 \node[colorstyle,label=left:$1$] (6) [below of=3] {};
 \draw[-,thick] (1) -- (2);
 \draw[-,thick] (2) -- (4);
 \draw[-,thick] (1) -- (3);
 \draw[-,thick] (1) -- (5);
 \draw[-,thick] (3) -- (6);
\node[] () at (0,-3.9) {$T$};

 \node[](6) at (7,0){};
 \node[colorstyle,label=above right:$2$] (7) [right of=6] {};
 \node[colorstyle,label=above right:$4$] (8) [below right of=7] {};
 \node[colorstyle,label=above:$5$] (9) [left of=6] {};
 \node[colorstyle,label=left:$3$] (10) [below of=6] {};
 \node[colorstyle,label=left:$1$] (11) [below of=10] {};
 \draw[-,thick] (7) -- (8);
 \draw[-,thick] (10) -- (11);
 \node[] () at (8, -3.9){$T(6)$};
 \draw[dashed, very thick] (9,-.5) circle (1.1) {};
 \node[] () at (9,1) {$T_2(6)$};
 \draw[dashed, very thick] (6.7,-2.2) circle (1.1) {};
 \node[] () at (8.3,-2.3) {$T_3(6)$};
 \draw[dashed, very thick] (5.8,.3) circle (.8) {};
 \node[] () at (5,-.7) {$T_5(6)$};
\end{tikzpicture}
\caption{Tree $T$, and its subtrees $T_j(6)$, $j=2,3,5$.}\label{duartetree}
\end{center}
\end{figure}
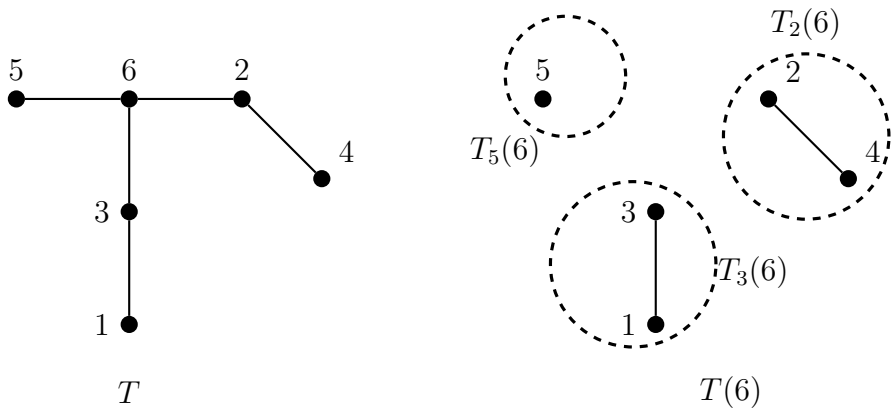

The following result is obtained by Duarte \cite{duarte}.
\begin{theorem}\label{duarte}
Let $T$ be a tree on $n$ vertices $1, 2,\ldots,n$ with $n\geq 2$. Let $\lambda_1,\lambda_2, \ldots, \lambda_n, \mu_1,\ldots,\mu_{n-1}$ be $2n-1$ real numbers such that $\lambda_1< \mu_1< \lambda_2<\cdots <\mu_{n-1}< \lambda_n$.  Then there exists a symmetric matrix $A$ in $S(T)$ with eigenvalues $\lambda_1,\lambda_2, \ldots, \lambda_n$ such that the eigenvalues of $A(1)$ are $\mu_1,\mu_2, \ldots, \mu_{n-1}$.
\end{theorem}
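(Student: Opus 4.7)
My plan is to proceed by induction on $n$; the case $n = 1$ is immediate, with $A := [\lambda_1]$.

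For the inductive step, I would root $T$ at vertex $1$, write $v_1, \ldots, v_k$ for its neighbors, and let $T_j := T_{v_j}(1)$ be the branches of sizes $n_j$, so $\sum_j n_j = n-1$. For any $A \in S(T)$, the submatrix $A(1)$ is block diagonal with blocks $A_j := A_{v_j}(1)$, so $C_{A(1)}(x) = \prod_j C_{A_j}(x)$. A Schur-complement expansion of $\det(xI - A)$ along row $1$ produces the key identity
\[
\frac{C_A(x)}{C_{A(1)}(x)} \;=\; x - a_{1,1} - \sum_{j=1}^k \frac{a_{1,v_j}^{2}\,C_{B_j}(x)}{C_{A_j}(x)},
\]
where $B_j := A_{v_j'}(1)$. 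Strict interlacing of the $\mu_i$'s in the $\lambda_i$'s gives the partial-fraction expansion
\[
\frac{\prod_i (x - \lambda_i)}{\prod_i (x - \mu_i)} \;=\; x - c - \sum_{i=1}^{n-1} \frac{s_i}{x - \mu_i},
\]
with $c = \sum_i \lambda_i - \sum_i \mu_i$ and all residues $s_i > 0$; I would set $a_{1,1} := c$, reducing the problem to matching the two rational tails.

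To match them, I would pick any partition of $\{\mu_1,\ldots,\mu_{n-1}\}$ into subsets $M_j$ with $|M_j| = n_j$, and put $\alpha_j := \sum_{\mu_i \in M_j} s_i > 0$. For branch $j$ the required residues of $C_{B_j}/C_{A_j}$ at the points of $M_j$ become the positive weights $w_i := s_i/\alpha_j$, which sum to $1$. I would then apply the inductive hypothesis to each $T_j$ (with $v_j$ playing the role of the distinguished vertex) to produce $A_j \in S(T_j)$ with $\sigma(A_j) = M_j$ and $\sigma(B_j) = N_j$ for any chosen spectrum $N_j$ strictly interlacing $M_j$; taking $a_{1,v_j}^2 := \alpha_j$ then equates the two sides of the key identity and delivers $A$.

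The step I expect to be the main obstacle is the residue-matching subproblem just invoked: given positive weights $(w_\ell)$ summing to $1$ on $M_j = \{\nu_1 < \cdots < \nu_{n_j}\}$, produce $N_j$ strictly interlacing $M_j$ so that the residues of $C_{B_j}/C_{A_j}$ at the $\nu_\ell$ equal $w_\ell$. I would realize $C_{B_j}$ as the Lagrange interpolant $P(x) := \sum_\ell w_\ell C_{A_j}'(\nu_\ell) \prod_{m \ne \ell}(x - \nu_m)/(\nu_\ell - \nu_m)$. A short calculation shows that its leading coefficient equals $\sum_\ell w_\ell = 1$, so $P$ is monic of degree $n_j - 1$. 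Finally, $P(\nu_\ell) = w_\ell\, C_{A_j}'(\nu_\ell)$ alternates in sign as $\ell$ varies, because $C_{A_j}'(\nu_\ell)$ does and the $w_\ell$ are positive, so the intermediate value theorem produces $n_j - 1$ real roots of $P$ strictly interlacing the $\nu_\ell$. Taking $N_j$ to be this set of roots supplies the spectrum needed from the inductive hypothesis and closes the induction.
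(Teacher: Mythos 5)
Your proposal is correct and is essentially Duarte's original inductive argument, which is the same strategy the paper adapts in its proof of the skew-symmetric analogue (Theorem \ref{main}): expand $C_A/C_{A(1)}$ via the tree/Schur structure (the paper's Lemma \ref{lastlemma}), match against the partial-fraction decomposition of $\prod_i(x-\lambda_i)/\prod_i(x-\mu_i)$ with positive residues (Lemma \ref{pfd}), distribute the $\mu$'s among the branches, and recurse after producing an interlacing set for each branch. The paper cites rather than reproves Theorem \ref{duarte} itself, and your Lagrange-interpolation realization of $C_{B_j}$ is a clean way of carrying out the interlacing step that the paper handles in the skew-symmetric case by the sign-counting of Claim 3.
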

Later, Hassani Monfared and Shader \cite{ms} extended Theorem \ref{duarte} to connected graphs. In this article we prove analogous results for real skew-symmetric matrices, with some combinatorial (sufficient) restrictions on the graph of the matrix. The structure of the paper is as follows.

In Section \ref{sectionduarteNEB} trees with nearly even branching at a vertex $v$ are defined, and it is shown that having the Duarte property with respect to a vertex in a matrix $A$ implies the nearly even branching property at that vertex for $G(A)$.

In Section \ref{sectiontree} we study the characteristic polynomials of a skew-symmetric matrix whose graph is a tree, and its relation to the characteristic polynomial of one of its principal submatrices. It is shown that if a tree has a nearly even branching property at a vertex $v$, then for any set of distinct eigenvalues that satisfy some necessary conditions, the $\lambda-\mu$ skew-symmetric SIEP has a solution. Furthermore, the solution has the Duarte property with respect to vertex $v$.

In Section \ref{sectionjacobian} we define and study a function that takes a matrix $A$ and maps it to its characteristic polynomial and the characteristic polynomial of a principal submatrix, $A(v)$. It is shown that this map has a nonsingular Jacobian, when it is evaluated at a point corresponding to a matrix with the Duarte property with respect to $v$.

Finally, in Section \ref{sectionconnected} we extend the result for trees with nearly even branching property at a vertex to their supergraphs with the aid of the Implicit Function Theorem.

\section{The Duarte property and trees with the nearly even branching property}\label{sectionduarteNEB}
In this section we define a special property, called the Duarte-property \cite{ms}, of a square matrix whose graph is a tree. Then we define a certain family of trees and discuss its properties.

\begin{definition}\label{DuarteProperty}
Let $A$ be an $n\times n$ matrix whose graph is a tree. If $G(A)$ has just one vertex, then $A$ has the Duarte-property with respect to $w$. 
 If $G(A)$ has more than one vertex, then $A$ has the Duarte-property with respect to $w$ provided the eigenvalues of $A(w)$ strictly interlace those of $A$ and for each neighbor $v$ of $w$, $A_v(w)$ has the Duarte-property with respect to the vertex $v$.
\end{definition}

\begin{example}
Consider the matrix $A$ below whose graph is $T$.

\begin{center}
\begin{tikzpicture}[node distance = 1cm, scale=.8,colorstyle/.style={circle, draw=black!100,fill=black!100, thick, inner sep=0pt, minimum size=1.2 mm}]

	\node[] () at (-7,-2) {$A = \left[\begin{array}{rrrrr}
0 & 8 & 0 & 0 & 0 \\
-8 & 0 & 4 & 0 & 1 \\
0 & -4 & 0 & 2 & 0 \\
0 & 0 & -2 & 0 & 0 \\
0 & -1 & 0 & 0 & 0
\end{array}\right],$};
	
	\node[] () at (-2,-2) {$T:$};
	\node[colorstyle,label=above:$1$] (1) {};
	\node[colorstyle,label=right:$2$] (2) [below of=1] {};
	\node[colorstyle,label=right:$5$] (5) [below right of=2] {};
	\node[colorstyle,label=right:$3$] (3) [below left of=2] {};
	\node[colorstyle,label=right:$4$] (4) [below of=3] {};
	
	\draw[-,thick] (1) -- (2) -- (3) -- (4);
	\draw[-,thick] (2) -- (5);
\end{tikzpicture}
\end{center}

Then 

\begin{center}
\begin{tikzpicture}[node distance = 1cm, scale=.8,colorstyle/.style={circle, draw=black!100,fill=black!100, thick, inner sep=0pt, minimum size=1.2 mm}]
	\node[] () at (-7,-1) {$A(1) = \left[\begin{array}{rrrr}
 0 & 4 & 0 & 1 \\
 -4 & 0 & 2 & 0 \\
 0 & -2 & 0 & 0 \\
 -1 & 0 & 0 & 0
\end{array}\right],$};
	
	\node[] () at (-2,-1) {$T(1):$};

	\node[colorstyle,label=right:$2$] (2) [] {};
	\node[colorstyle,label=right:$5$] (5) [below right of=2] {};
	\node[colorstyle,label=right:$3$] (3) [below left of=2] {};
	\node[colorstyle,label=right:$4$] (4) [below of=3] {};
	
	\draw[-,thick] (2) -- (3) -- (4);
	\draw[-,thick] (2) -- (5);
\end{tikzpicture}
\end{center}

The eigenvalues of $A$ are approximately $0$, $\pm 9.05 \, \i$, and $\pm 1.78 \, \i$. The eigenvalues of $A(1)$ are approximately $\pm 4.56 \, \i$ and $\pm 0.44 \, \i$, which strictly interlace those of $A$. The eigenvalue of $(A_2(1))_5(2)= \begin{bmatrix}0\end{bmatrix} $ is $0$, and the eigenvalues of $(A_2(1))_3(2) = \left[ \begin{array}{rr}0 & 2 \\ -2 & 0 \end{array}\right] $ are $\pm 2 \, \i$, which both strictly interlace those of $A_2(1)$. And finally, the eigenvalue of $((A_2(1))_3(2))_4(3)=\begin{bmatrix}0\end{bmatrix} $ is $0$ which strictly interlace those of $(A_2(1))_3(2)$. Thus, $A$ has the Duarte property with respect to vertex $1$.
\end{example}

\begin{definition}\label{NEBtree}
Let $T$ be a tree on $n$ vertices, and $w$ be a vertex of $T$. $T$ is defined to have \textit{nearly even branching property at $w$} (in short, $T$ is {\it NEB at $w$}) as follows. If $n=1$, $T$ is NEB at $w$.  If $n\geq 2$, $T$ is NEB at $w$ if the following conditions are satisfied:
\begin{itemize}
\item[(i)] $T(w)$ has exactly one odd component if $n$ is even, and $T(w)$ has no odd component if $n$ is odd; and
\item[(ii)] for each neighbor $v$ of $w$ in $T$, $T_v(w)$ is NEB at $v$. 
\end{itemize} 
\end{definition}

\begin{figure}[h]
\begin{center}
\begin{tikzpicture}[scale=.8,colorstyle/.style={circle, draw=black!100,fill=black!100, thick, inner sep=0pt, minimum size=1.2 mm}]
\node (1) at (0,0)[colorstyle,label=right:$v$]{};
\node (2) at (0,-1)[colorstyle]{};
\draw [thick] (1)--(2);
\node () at (0,1) {$P$};

\node (1) at (2,0)[colorstyle,label=right:$v$]{};
\node (2) at (2,-1)[colorstyle]{};
\node (3) at (2,-2)[colorstyle]{};
\draw [thick] (1)--(2)--(3);
\node () at (2,1) {$Q$};

\begin{scope}[xshift=-3.5cm]
\node (1) at (9,0)[colorstyle,label=right:$v$]{};
\node (2) at (8.3,-1)[colorstyle]{};
\node (3) at (9,-1)[colorstyle]{};
\node (4) at (9.7,-1)[colorstyle]{};
\node (5) at (8.3,-2)[colorstyle]{};
\node (6) at (9,-2)[colorstyle]{};
\draw [thick] (5) -- (2)--(1)--(3) -- (6);
\draw [thick] (4)--(1);
\node () at (9,1) {$T$};
\end{scope}

\begin{scope}
\node (1) at (9,0)[colorstyle,label=right:$v$]{};
\node (2) at (8.3,-1)[colorstyle]{};
\node (3) at (9,-1)[colorstyle]{};
\node (4) at (9.7,-1)[colorstyle]{};
\node (5) at (8.3,-2)[colorstyle]{};
\node (6) at (9,-2)[colorstyle]{};
\node (7) at (9.7,-2)[colorstyle]{};
\node (8) at (8.3,-3)[colorstyle]{};
\draw [thick] (8) -- (5) -- (2)--(1)--(3) -- (6);
\draw [thick] (7) -- (4)--(1);
\node () at (9,1) {$S$};
\end{scope}

\end{tikzpicture} 
\caption{Examples of NEB trees.}\label{NEBex}
\end{center}
\end{figure}
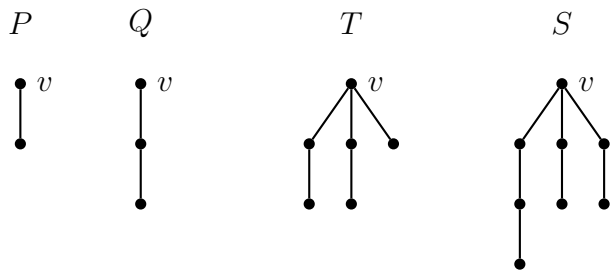

\begin{figure}
\begin{center}
\begin{tikzpicture}[scale=.8,colorstyle/.style={circle, draw=black!100,fill=black!100, thick, inner sep=0pt, minimum size=1.2 mm}]
\node (1) at (4,0)[colorstyle,label=right:$v$]{};
\node (2) at (3.5,-1)[colorstyle]{};
\node (3) at (4.5,-1)[colorstyle]{};
\draw [thick] (2)--(1)--(3);
\node () at (4,1) {$K$};

\node (1) at (6,0)[colorstyle,label=right:$v$]{};
\node (2) at (5.3,-1)[colorstyle]{};
\node (3) at (6,-1)[colorstyle]{};
\node (4) at (6.7,-1)[colorstyle]{};
\draw [thick] (2)--(1)--(3);
\draw [thick] (4)--(1);
\node () at (6,1) {$L$};

\begin{scope}[xshift=5cm,yshift=-1cm]
\node (0) at (4,1)[colorstyle,label=right:$v$]{};
\node (1) at (4,0)[colorstyle,label=right:$w$]{};
\node (2) at (3.5,-1)[colorstyle]{};
\node (3) at (4.5,-1)[colorstyle]{};
\draw [thick] (2)--(1)--(3);
\draw [thick] (0)--(1);
\node () at (4,2) {$F$};
\end{scope}

\node (1) at (12,0)[colorstyle,label=right:$v$]{};
\node (2) at (11.3,-1)[colorstyle]{};
\node (3) at (12,-1)[colorstyle]{};
\node (4) at (12.7,-1)[colorstyle]{};
\node (5) at (11.3,-2)[colorstyle]{};
\node (7) at (12.7,-2)[colorstyle]{};
\node (8) at (11.3,-3)[colorstyle]{};
\draw [thick] (8) -- (5) -- (2)--(1)--(3);
\draw [thick] (7) -- (4)--(1);
\node () at (12,1) {$G$};

\node (1) at (15,0)[colorstyle,label=right:$v$]{};
\node (2) at (14.3,-1)[colorstyle]{};
\node (3) at (15,-1)[colorstyle,label=right:$w$]{};
\node (4) at (16.3,-1)[colorstyle]{};
\node (5) at (14,-2)[colorstyle]{};
\node (6) at (15,-2)[colorstyle]{};
\node (7) at (16,-2)[colorstyle]{};
\node (8) at (14,-3)[colorstyle]{};
\node (9) at (15.4,-3)[colorstyle]{};
\node (10) at (14.6,-3)[colorstyle]{};
\draw [thick] (8) -- (5) -- (2)--(1)--(3) -- (6);
\draw [thick] (7) -- (4)--(1);
\draw [thick] (9) -- (6) --(10);
\node () at (15,1) {$H$};
\end{tikzpicture}
\caption{Nonexamples of NEB trees.}\label{NEBnonex}
\end{center}
\end{figure}
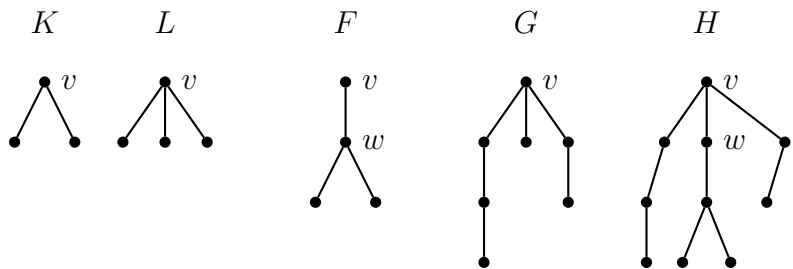
\begin{example}
Every path is NEB with respect to a pendent vertex. Every star with at least $4$ vertices is not NEB with respect to any vertex. In Figure \ref{NEBex}, $P$ is NEB at $v$ since $P(v)$ has only one vertex. Also, $Q$ is NEB at $v$ since $Q(v)$ has only one connected component which is a copy of $P$ and it is shown that $P$ is NEB at its top vertex. Furthermore, $T$ is NEB at $v$ since $T(v)$ has only one odd connected component, and all of its components have either one vertex or are copies of $P$ which are NEB at their top vertices. Similarly, $S$ is NEB at $v$ since $S(v)$ has only one odd connected component (a copy of $Q$) and all of its connected components (two copies of $P$ and one copy of $Q$) are NEB at their top vertices. 

In Figure \ref{NEBnonex}, $K$ is not NEB at $v$, since $K(v)$ has 2 odd connected component (2 isolated vertices). Also, $L$ is not NEB at $v$, since $L(v)$ has 3 odd components (3 isolated vertices). Furthermore, while $F(v)$ has exactly one odd component, $F$ is not NEB at $v$, since $F_w(v)$ is a copy of $K$ which is not NEB at $w$. Moreover, $G$ is not NEB at $v$, since $G(v)$ has 2 odd components (an isolated vertex and a copy of $Q$). Eventually, while $H(v)$ has exactly one odd component, $H$ is not NEB tree at $v$, since $H_w(v)$ is a copy of $F$ which is not NEB at $w$.
\end{example}

The following theorem shows that having the Duarte property implies the nearly even branching property.
\begin{theorem}\label{Duarte2NEB}
If  a tree $T$ of order $n\geq 3$ is not NEB at a vertex $v$, then no $A\in S^-(T)$ has the Duarte property with respect to the vertex $v$.
\end{theorem}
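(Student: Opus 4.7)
My plan is to induct on $n=|T|$, exploiting the parallel recursive structure of Definitions \ref{DuarteProperty} and \ref{NEBtree}. The base case is $n=3$, where $T$ must be $P_{3}$ and the only non-NEB vertex is the middle vertex $v$: the two pendants are non-adjacent, so for any $A\in S^{-}(T)$ the submatrix $A(v)$ is the $2\times 2$ zero matrix. Its spectrum is $\{0,0\}$, and strict interlacing is immediately destroyed, so $A$ cannot have the Duarte property at $v$.

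For the inductive step with $n\ge 4$, I would split according to which clause of Definition \ref{NEBtree} is violated at $v$. If clause (ii) fails, pick a neighbor $u$ of $v$ with $T_{u}(v)$ not NEB at $u$. Trees of order $1$ or $2$ are trivially NEB at every vertex, so $3\le |T_{u}(v)|<n$, and the inductive hypothesis applies to $A_{u}(v)\in S^{-}(T_{u}(v))$ and shows that $A_{u}(v)$ lacks the Duarte property at $u$. The recursive clause in Definition \ref{DuarteProperty} then transfers this failure to $A$ at $v$.

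If clause (i) fails, my plan is to count the odd-order components of $T(v)$ by a parity argument and invoke the classical fact that every real skew-symmetric matrix of odd order is singular. After a suitable permutation, $A(v)$ is block diagonal with one block per connected component of $T(v)$, and each odd-order block contributes $0$ as an eigenvalue. Since the component sizes sum to $n-1$, the number $k$ of odd components satisfies $k\equiv n-1\pmod{2}$. Failure of (i) means $k\neq 1$ when $n$ is even and $k\neq 0$ when $n$ is odd; combined with the parity constraint this forces $k\ge 3$ in the even case and $k\ge 2$ in the odd case. In either case $0$ is an eigenvalue of $A(v)$ of multiplicity at least $2$, so $\sigma(A(v))$ is not even a set of distinct elements, much less a strictly interlacing one, and the Duarte property fails at $v$.

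I do not anticipate a serious obstacle: the routine work is the parity bookkeeping, and the only substantive algebraic input is the singularity of odd-order real skew-symmetric matrices, which is precisely what makes a ``nearly even branching'' hypothesis the right combinatorial condition in the skew-symmetric setting.
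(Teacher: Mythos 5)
Your proposal is correct and follows essentially the same route as the paper's own proof: induction on $n$ with base case $P_3$ at its center, with the inductive step split according to whether clause (i) or clause (ii) of the NEB definition fails, using the induction hypothesis for clause (ii) and the singularity of odd-order real skew-symmetric matrices for clause (i). Your parity bookkeeping (observing $k\equiv n-1 \pmod 2$ to deduce $k\ge 3$ or $k\ge 2$) unifies what the paper handles as separate even and odd cases, and you pass directly from multiplicity $\ge 2$ of $0$ in $\sigma(A(v))$ to failure of strict interlacing rather than detouring through Cauchy interlacing as the paper does, but these are streamlinings of the same argument rather than a different one.
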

\begin{proof}
Let $T$ be a tree of order $n\geq 3$ which is not NEB at a vertex $v$. Let $N(v)=\{v_1,\ldots,v_k\}$. Let $A$ be in $S^-(T)$ with $\sigma(A)=\{\lambda_1,\ldots,\lambda_n\}$ and $\sigma(A(v))=\{\mu_1,\ldots,\mu_{n-1}\}$. We will induct on the number of vertices. For $n=3$, the only tree on $3$ vertices which is not NEB with respect to a vertex is the path on $3$ vertices, and it is not NEB with respect to the middle vertex. Let $T$ be $K_{1,2}$ in Figure \ref{NEBex}

Then $0\in \sigma(A)$, and also $0\in \sigma(A(v))$. That is, $\sigma(A(v))$ does not strictly interlace $\sigma(A)$. So, $A$ does not have the Duarte property with respect to vertex $v$.

Induction hypothesis: Assume that for any tree $T$ on at most $n-1$ vertices which is not $NEB$ with respect to a vertex $v$, any $A\in S^-(T)$ does not have the Duarte property with respect to $v$.

Now, let $T$ be a tree on $n$ vertices which is not $NEB$ with respect to a vertex $v$, and let $N(v)=\{v_1,\ldots,v_k\}$ be the set of all neighbors of $v$ in $T$. If $\sigma(A(v))$ does not strictly interlace $\sigma(A)$ then we are done. Otherwise there are two cases:

Case 1. $n$ is even.\\
Since $n$ is even and $T$ is not NEB at a vertex $v$, one of the followings is true.
\begin{enumerate}
\item[\rm (a)] $T(v)$ has at least two odd components.
\item[\rm (b)] $T_j(v)$ is not NEB at a vertex  $j$, for some $j\in N(v)$.
\end{enumerate}
First note that since $n$ is even, $\sigma(A)$ does not contain $0$, and also $T(v)$ contains at least one odd component. If (a) holds, then $T_{r}(v)$ and $T_{s}(v)$ are distinct odd components for some distinct $r$ and  $s$ in  $N(v)$. Since $|A_{r}(v)|$ and $|A_{s}(v)|$ are odd, each of $\sigma(A_{r}(v))$ and $\sigma(A_{s}(v))$ contains $0$. That implies multiplicity of $0$ in $\sigma(A(v))$ is at least two, hence by Cauchy interlacing inequalities $0\in \sigma(A)$. Thus, $\sigma(A(v))$  does not strictly interlace $\sigma(A)$. If (b) holds, then $A_j(v)$ does not have the Duarte property with respect to $j$ by the induction hypothesis. Hence $A$ does not have the Duarte property with respect to $v$ by definition. \\

Case 2. $n$ is odd.\\
Since $n$ is odd and $T$ is not a NEB at $v$, one of the followings is true.
\begin{enumerate}
\item[\rm (a)] $T(v)$ has at least one odd component.
\item[\rm (b)] $T_j(v)$ is not NEB at $j$, for some $j\in N(v)$.
\end{enumerate}
First note that since $n$ is odd, $\sigma(A)$ contains $0$. If (a) holds, then $T_{r}(v)$ is an odd component for some $r\in N(v)$. Since $|A_{r}(v)|$ is odd, $\sigma(A_{r}(v))$ contains $0$. Thus $\sigma(A(v))$ does not strictly interlace $\sigma(A)$. Finally if (b) holds, then $A_j(v)$ does not have the Duarte property with respect to $j$ by induction hypothesis. Hence $A$ does not have the Duarte property with respect to $v$ by definition.
\end{proof}

In the following lemma we show that having the Duarte property for a matrix $A$ is very special. That is, the only skew-symmetric matrix which almost commutes with $A$ and has a zero entry whenever $A$ has zero entries, is the zero matrix. The proof is a very similar to that of Lemma 2.2 of \cite{ms}
\begin{lemma}\label{cent}
For matrices $A$ and $B$, let $[A,B]$ denote the commutator of $A$ and $B$, that is, $[A,B]=AB-BA$. Assume that $A$ is a skew-symmetric matrix of order $n$ and $G(A)$ is a tree $T$. Furthermore, assume that $A$ has the Duarte-property with respect to the vertex $w$. Let $X$ be a skew-symmetric matrix such that \begin{itemize}
\item[\rm (a)] $A\circ X=O$, and
\item[\rm (b)] $[A,X](w)=O$.
\end{itemize}
Then $X=O$.
\end{lemma}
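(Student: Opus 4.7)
The plan is to induct on the order $n$. The base case $n=1$ is immediate since every $1\times 1$ skew-symmetric matrix is zero. For the inductive step, I would permute rows and columns so that $w$ is last and group the remaining indices into the vertex sets of the connected components $T_{v_1}(w),\dots,T_{v_k}(w)$ of $T(w)$, where $N(w)=\{v_1,\dots,v_k\}$. In this block form $A$ is block-diagonal on its first $k$ blocks with diagonal blocks $A_{v_i}(w)$, and its last block-column meets block $i$ only in the single entry $(v_i,w)$, which carries a nonzero value $\alpha_i$. Hypothesis (a) then forces $X_{v_i,w}=0$ for each $i$ and ensures that each diagonal block $X_{ii}$ of $X$ vanishes on every edge of $T_{v_i}(w)$.

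A direct block expansion of the commutator gives
\[
[A,X]_{ii} \;=\; [A_{v_i}(w),\,X_{ii}] \;+\; \alpha_i\bigl(x_i e_{v_i}^T - e_{v_i} x_i^T\bigr),
\]
where $x_i$ denotes the restriction of the $w$-column of $X$ to block $i$ and $e_{v_i}$ is the standard basis vector of block $i$ at vertex $v_i$. Since the rank-two correction is supported only in row $v_i$ and column $v_i$, setting $[A,X]_{ii}=O$ and deleting row/column $v_i$ yields $[A_{v_i}(w),X_{ii}](v_i)=O$. Because $A_{v_i}(w)$ has the Duarte property with respect to $v_i$ and $A_{v_i}(w)\circ X_{ii}=O$, the induction hypothesis applied to the smaller matrix $A_{v_i}(w)$ gives $X_{ii}=O$. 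Substituting back reduces the displayed identity to $\alpha_i(x_i e_{v_i}^T - e_{v_i} x_i^T)=O$, and since $\alpha_i\neq 0$ this skew combination vanishes only when $x_i$ is a scalar multiple of $e_{v_i}$; the constraint $(x_i)_{v_i}=X_{v_i,w}=0$ then forces $x_i=0$ for every $i$.

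For the off-diagonal blocks $i\neq j$ the analogous expansion gives
\[
[A,X]_{ij} \;=\; A_{v_i}(w)X_{ij} - X_{ij}A_{v_j}(w) - \alpha_i e_{v_i} x_j^T + \alpha_j x_i e_{v_j}^T.
\]
Having already established $x_i=x_j=0$, the vanishing of $[A,X]_{ij}$ reduces to the Sylvester equation $A_{v_i}(w)X_{ij}=X_{ij}A_{v_j}(w)$. The Duarte property forces the $n-1$ eigenvalues of $A(w)$ to be distinct, and since $A(w)$ is block-diagonal with diagonal blocks $A_{v_i}(w)$, the spectra $\sigma(A_{v_i}(w))$ and $\sigma(A_{v_j}(w))$ are disjoint, so the Sylvester equation admits only $X_{ij}=O$. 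Combined with $X_{ii}=O$ and $x_i=0$ for every $i$, this completes the induction and yields $X=O$.

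The main obstacle I anticipate is simply the bookkeeping in deriving the two displayed commutator identities---tracking signs, transposes, and which $v_i$-coordinate appears where in the block decomposition. Once those identities are in hand the Duarte property does the heavy lifting in both cases: it feeds the inductive hypothesis on the diagonal blocks, and it guarantees the disjoint-spectra condition needed for the Sylvester step on the off-diagonal blocks.
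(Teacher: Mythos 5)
Your proof is correct and follows essentially the same approach as the paper's: block-decompose with respect to the vertex $w$ and the components of $T(w)$, use (a) together with the induction hypothesis on each diagonal block to kill $X_{ii}$ and the connecting vectors $x_i$, then use the disjointness of $\sigma(A_{v_i}(w))$ and $\sigma(A_{v_j}(w))$ (coming from strict interlacing in the Duarte property) to kill the off-diagonal blocks via the Sylvester equation. The only superficial difference is that you place $w$ in the last block position rather than the first; the structure of the argument and every key step coincide with the paper's.
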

\begin{proof}
The proof is by induction on $n$. Without loss of generality we can take $w=1$, and $N(1)=\{ 2,3,\ldots, k+1\}$. For $n\leq 2$, (a)  and the fact that $X$ is skew-symmetric imply that $X=O$.

Induction hypothesis: Assume that for any $m\times m$ matrix $A$, $m \leq l$, for $l < n$, the only skew-symmetric matrix that satisfies conditions \rm{(a)} and  \rm{(b)} is the zero matrix.

Assume $n\geq 3$ and proceed by induction. For $i=1,2,\ldots,k$, let $A_i$ denote the matrix $A_{i+1}(1)$. Then the matrices $A$ and $X$, up to a permutation of rows and columns, have the form 
$$A=\left[ \begin{array}{c|cccc}
0 & b_1^T & b_2^T & \cdots & b_k^T\\\hline
-b_1 & A_1 & O & \cdots & O\\
-b_2 & O & A_2 & \cdots & O\\
\vdots & \vdots & \vdots & \ddots & \vdots\\
-b_k & O & O & \cdots & A_k
\end{array} \right],
\qquad 
 X=\left[ \begin{array}{c|cccc}
0 & u_1^T & u_2^T & \cdots & u_k^T\\\hline
-u_1 & X_{11} & X_{12} & \cdots & X_{1k}\\
-u_2 & X_{21} & X_{22} & \cdots & X_{2k}\\
\vdots & \vdots & \vdots & \ddots & \vdots\\
-u_k & X_{k1} & X_{k1} & \cdots & X_{kk}
\end{array} \right],$$
so that each column vector $b_i$ has exactly one nonzero entry since it is a tree. Without loss of generality we take this nonzero entry of each $b_i$ to be in its first position. Thus the $A_i$'s correspond to the $T_v(w)$'s.

The $(2,2)$-block of $[A,X]$ is \[ -b_1 u_1^T + [A_1,X_{11}] + u_1 b_1^T = O. \] Thus $[A_1,X_{11}] = b_1 u_1^T - u_1 b_1^T $.
Since $b_1$ has just one nonzero entry,  the nonzero entries of  $b_1 u_1^T - u_1 b_1^T$ 
lie in its first row or first column. Thus  $[A_1, X_{11}](1)=O$.

So, $A_1$ and $X_{11}$ satisfy the induction hypothesis, and thus $X_{11}=O$ and 
$b_1 u_1^T - u_1 b_1^T= O$. But the first row of $b_1 u_1^T - u_1 b_1^T$ is a nonzero multiple
of $u_1^T$, we conclude that $u_1$ is the zero vector. Similarly, one can show that  each of $X_{22}, X_{33}, \ldots, X_{kk}, u_2, u_3, \ldots, u_k$ is zero.

Consider the $(r+1,s+1)$-block of $[A,X]$, where $r\neq s$. By (b),  $A_r X_{rs}=X_{rs}A_s$. Since $A$ has the  Duarte-property with respect to vertex $1$,  $A_r$ and $A_s$ have no common eigenvalue. Since $A_r X_{rs}=X_{rs}A_s$, and $A_r$ and $A_s$ do not have a common eigenvalue, $X_{rs}=O$ \cite[Lemma 1.1 (a)]{ms}. This holds for all $r$ and $s$. Thus $X=O$.
\end{proof}

\section{The $\lambda-\mu$ skew-symmetric SIEP for trees}\label{sectiontree}
In this section we formulate the $\lambda-\mu$ skew-symmetric SIEP for the class of NEB trees and provide a solution for it. Recall that, the characteristic polynomial of a real skew-symmetric matrix is a real polynomial. So, all of its eigenvalues occur in conjugate pairs. So, one of the necessary assumptions for this problem to have a solution is that the prescribed eigenvalues to come as conjugate pairs. The following theorem is our main theorem for this section and we provide the proof after mentioning some preliminary results. 
\begin{theorem}\label{main}
Let $T$ be a tree on $n$ vertices $1, 2,\ldots,n$ with $n\geq 2$. Let \[\lambda_1< \mu_1< \lambda_2<\cdots <\mu_{n-1}< \lambda_n\] be $2n-1$ real numbers such that \[\lambda_j=-\lambda_{n+1-j},\] for all $j=1,\ldots,n$, and \[\mu_k=-\mu_{n-k},\] for all  $k=1,\ldots,n-1$. If $T$ is NEB at a vertex $v$, then there exists a skew-symmetric matrix $A$ in $S^-(T)$ with eigenvalues $\i \lambda_1,\i \lambda_2, \ldots, \i \lambda_n$ such that the eigenvalues of $A(v)$ are $\i \mu_1,\i \mu_2, \ldots, \i \mu_{n-1}$.
\end{theorem}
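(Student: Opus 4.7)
I plan to argue by induction on $n$. The base case $n = 2$ is direct: the symmetry $\mu_1 = -\mu_1$ forces $\mu_1 = 0$, and $A = \begin{pmatrix}0 & \lambda_2 \\ -\lambda_2 & 0\end{pmatrix}$ realizes the required spectra.

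For the inductive step with $n \geq 3$, write $N(v) = \{v_1, \ldots, v_k\}$ and $n_j = |T_{v_j}(v)|$. Expanding $\det(xI - A)$ over matchings of $T$ (using $a_{ij}a_{ji} = -a_{ij}^2$ for skew-symmetric entries) gives the recursion
\begin{equation*}
C_A(x) \;=\; x \prod_{j=1}^k C_{A_{v_j}(v)}(x) \;+\; \sum_{j=1}^k b_j^{\,2}\, C_{A_{v_j'}(v)}(x) \prod_{i \neq j} C_{A_{v_i}(v)}(x),
\end{equation*}
where $b_j = a_{v, v_j}$. The plan is to first partition $\{\mu_1, \ldots, \mu_{n-1}\}$ into symmetric (closed-under-negation) subsets $M_1, \ldots, M_k$ with $|M_j| = n_j$; the NEB hypothesis provides exactly the parity accounting required---placing $0$ in the unique odd-sized $M_j$ when $n$ is even, and using no $0$ at all when $n$ is odd.

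Given such a partition, so that $C_{A(v)}(x) = \prod_j C_{A_{v_j}(v)}(x) = \prod_k (x - \i\mu_k)$, I would determine the weights $b_j^{\,2}$ and the sub-spectra $M_j'$ simultaneously by matching residues on both sides of $\frac{C_A(x)}{C_{A(v)}(x)} = x + \sum_j b_j^{\,2}\, \frac{C_{A_{v_j'}(v)}(x)}{C_{A_{v_j}(v)}(x)}$. At each pole $\i\mu_k \in M_j$ this yields $b_j^{\,2}\, C_{A_{v_j'}(v)}(\i\mu_k) = \frac{\prod_i (\i\mu_k - \i\lambda_i)}{\prod_{\mu \notin M_j}(\i\mu_k - \i\mu)}$; imposing that the resulting Lagrange interpolant be monic of degree $n_j - 1$ then forces $b_j^{\,2} = \sum_{\mu_k \in M_j}\frac{\prod_i (\i\mu_k - \i\lambda_i)}{\prod_{l \neq k}(\i\mu_k - \i\mu_l)}$, whose positivity follows cleanly by a sign count from the strict interlacing $\lambda_1 < \mu_1 < \cdots < \lambda_n$ (each individual residue turns out positive after cancelling the factor $\i^2 = -1$). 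With $b_j^{\,2}$ pinned down, the interpolated polynomial $C_{A_{v_j'}(v)}$ is fully determined, and its roots should constitute the desired sub-spectrum $M_j'$. Invoking induction on each $T_{v_j}(v)$ (which is NEB at $v_j$ by clause (ii) of Definition~\ref{NEBtree}) with data $(M_j, M_j')$ produces $A_{v_j}(v)$, and assembling them with edge weights $b_j$ yields $A$.

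The main obstacle is verifying that the partition $\{M_j\}$ can be chosen so that, for every $j$, the resulting polynomial $C_{A_{v_j'}(v)}$ has real coefficients, has purely imaginary roots, and has roots $M_j'$ that \emph{strictly} interlace $M_j$ on the imaginary axis. The real-coefficient and purely-imaginary-root conditions should follow from the $\pm$-symmetry of the input (forcing the interpolant to be even or odd in $x$ according to the parity of $n_j$), but the strict interlacing---the precondition needed to apply the inductive hypothesis---is the delicate point: it must be established by sign-tracking in the residue formula, using precisely the NEB parity bookkeeping to guarantee that $C_{A_{v_j'}(v)}(\i\mu_k)$ (a real or purely imaginary scalar, according to parity) alternates in sign between consecutive elements of $M_j$.
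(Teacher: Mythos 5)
Your proposal is essentially the paper's own proof: the same induction on $n$, the same recursion $\tfrac{C_A(x)}{C_{A(v)}(x)} = x + \sum_{j\in N(v)} a_{vj}^2\,\tfrac{C_{A_{j'}(v)}(x)}{C_{A_j(v)}(x)}$ (the paper's Lemma~3.4), the same NEB-driven partition of the $\mu$'s among the branches of $T(v)$, the same residue computation to pin down the edge weights $a_{vj}^2$ and the child $\mu$-spectra via monic Lagrange interpolation, and the same three verification points (positivity of the weights, realness/parity of the interpolants, and strict interlacing of their roots) that the paper isolates as Claims~1--3. The sign-counting and parity arguments you sketch for those claims are exactly what the paper carries out in full.
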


If $p(x)$ is the characteristic polynomial of a real skew-symmetric matrix of order $n$, then the coefficient of $x^{n-k}$ is zero, for odd $k$. The lemma below shows that such polynomials behave rather nicely on the  imaginary axis of the complex plane. In particular, they map the imaginary axis either to itself or to the real axis. We will use this fact later to show that certain functions have zeros on the imaginary axis.
\begin{lemma}\label{polynomial}
Let $p(x)= \displaystyle\sum_{j=0}^{n} a_{j} x^{n-j}$ be a real polynomial where $a_j=0$ for all odd $j$. Then 
\begin{enumerate}
\item[\rm (a)] if $n$ is even, then $p(\i a)$ is real for any real number $a$, and
\item[\rm (b)] if $n$ is odd, then $p(\i a)$ is purely imaginary for any real number $a$.
\end{enumerate}
\end{lemma}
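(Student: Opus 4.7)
The plan is to substitute $x = \i a$ directly into $p$, use the hypothesis that $a_j = 0$ for all odd $j$ to restrict the sum to even indices $j = 2m$, and then exploit the fact that $\i^{2k} = (-1)^k$ is real while $\i^{2k+1} = \i(-1)^k$ is purely imaginary. Since the coefficients $a_{2m}$ are real, the reality/imaginarity of each surviving term $a_{2m}(\i a)^{n-2m}$ will be governed entirely by the parity of $n - 2m$, which in turn is governed by the parity of $n$.

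More concretely, writing
\[
p(\i a) = \sum_{m=0}^{\lfloor n/2\rfloor} a_{2m}\,(\i a)^{n-2m} = \sum_{m=0}^{\lfloor n/2\rfloor} a_{2m}\,\i^{n-2m}\,a^{n-2m},
\]
I would split into the two cases. In case (a), with $n$ even, every exponent $n-2m$ is even, so $\i^{n-2m} = (-1)^{(n-2m)/2} \in \mathbb{R}$; since the $a_{2m}$ and the powers of $a$ are real, every summand is real, hence so is $p(\i a)$. In case (b), with $n$ odd, every exponent $n-2m$ is odd, so $\i^{n-2m} = \i \cdot (-1)^{(n-2m-1)/2} \in \i\mathbb{R}$; every summand is then purely imaginary, and therefore so is $p(\i a)$.

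There is no real obstacle here — the argument is a bookkeeping exercise once the sum is restricted to even $j$. The only minor care needed is handling $\lfloor n/2\rfloor$ as the upper limit correctly, and noting that the constant term (when it is present) fits the same parity pattern.
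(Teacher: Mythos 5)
Your proof is correct and follows exactly the same idea as the paper's (the paper simply states the observation that $(\i a)^k$ is real for even $k$ and purely imaginary for odd $k$, leaving the bookkeeping implicit). Your version just spells out the details.
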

\begin{proof}
It follows from the fact that for any nonzero real number $a$, $(\i a)^k$ is a real number, for any even integer $k$, and it is purely imaginary, for any odd integer $k$. 
\end{proof}

The following lemma plays a key role in the proof of Theorem \ref{main}.
\begin{lemma}\label{pfd}
Let $\lambda_1< \mu_1< \lambda_2<\cdots <\mu_{n-1}< \lambda_n$ be $2n-1$ real numbers such that $\lambda_j=-\lambda_{n+1-j}$ for all $j=1,\ldots,n$, and $\mu_k=-\mu_{n-k}$ for all  $k=1,\ldots,n-1$. Let $f(x)=\prod_{j=1}^n(x-\i \lambda_j)$ and  $g(x)=\prod_{j=1}^{n-1}(x-\i\mu_j)$. Then
\begin{enumerate}
\item[\rm (a)]\label{pfda} the coefficient of $x^{n-1}$ in $f(x)$ and the coefficient of $x^{n-2}$ in $g(x)$ are zero, and
\item[\rm (b)]\label{pfdb} $\displaystyle\frac{f(x)}{g(x)}=x+\sum_{j=1}^{n-1}\frac{c_j}{x-\i \mu_j}$ for some $c_j>0$ where  $c_k=c_{n-k}$ for $k=1,\ldots,n-1$.
\end{enumerate}
\end{lemma}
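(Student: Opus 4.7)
The plan is to treat the two parts separately, with part (a) being a direct calculation of Vieta-type sums and part (b) being a standard partial fraction decomposition whose residues are computed explicitly and then shown to be positive and symmetric.

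For part (a), the coefficient of $x^{n-1}$ in $f(x)$ equals $-\i\sum_{j=1}^n \lambda_j$, and the coefficient of $x^{n-2}$ in $g(x)$ equals $-\i\sum_{j=1}^{n-1}\mu_j$. I would show both sums vanish by using the pairings $j \leftrightarrow n+1-j$ on the $\lambda$'s and $k \leftrightarrow n-k$ on the $\mu$'s. A fixed point of the first pairing exists only when $n$ is odd (giving $\lambda_{(n+1)/2}=0$), and a fixed point of the second pairing exists only when $n$ is even (giving $\mu_{n/2}=0$); in every case the paired terms cancel in the sum, so both coefficients are zero.

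For part (b), I would begin by writing the partial fraction expansion
\[
\frac{f(x)}{g(x)} = x + q + \sum_{j=1}^{n-1}\frac{c_j}{x-\i\mu_j},
\]
which exists because $\deg f - \deg g = 1$ and the roots of $g$ are distinct. Clearing denominators and reading off the coefficient of $x^{n-1}$, one gets $q$ equal to (coefficient of $x^{n-1}$ in $f$) minus (coefficient of $x^{n-2}$ in $g$), both of which vanish by part (a); hence $q=0$. The residue formula then gives
\[
c_j = \frac{f(\i\mu_j)}{g'(\i\mu_j)} = -\frac{\prod_{k=1}^n(\mu_j-\lambda_k)}{\prod_{k\neq j}(\mu_j-\mu_k)},
\]
after simplifying the powers of $\i$ (the numerator is $\i^n$ and the denominator is $\i^{n-1}\cdot \i^{-1}\cdot \text{something}$—I would do the $\i^n/\i^{n-2}=-1$ bookkeeping carefully here).

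To show $c_j>0$, I would use the strict interlacing $\lambda_1<\mu_1<\lambda_2<\cdots<\mu_{n-1}<\lambda_n$: in the numerator exactly $n-j$ of the factors $\mu_j-\lambda_k$ are negative, and in the denominator exactly $n-1-j$ of the factors $\mu_j-\mu_k$ are negative. The sign of the ratio is therefore $(-1)^{n-j}/(-1)^{n-1-j}=-1$, so after the leading minus sign $c_j>0$. Finally, for the symmetry $c_k=c_{n-k}$, I would substitute $j'=n+1-j$ in the numerator product for $c_{n-k}$ (using $\lambda_j=-\lambda_{n+1-j}$ and $\mu_{n-k}=-\mu_k$) to turn it into $(-1)^n\prod_{j'=1}^n(\mu_k-\lambda_{j'})$, and substitute $j'=n-j$ in the denominator product (using $\mu_j=-\mu_{n-j}$) to turn it into $(-1)^{n-2}\prod_{j'\neq k}(\mu_k-\mu_{j'})$. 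The factors $(-1)^n$ and $(-1)^{n-2}$ are equal, so they cancel and $c_{n-k}=c_k$.

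The bookkeeping of powers of $\i$ and of negative signs is the only point requiring real care; everything else is a formal manipulation. I expect the main (mild) obstacle to be the symmetry $c_k=c_{n-k}$, because one must verify that the reindexing really bijects the index set $\{1,\ldots,n-1\}\setminus\{n-k\}$ onto $\{1,\ldots,n-1\}\setminus\{k\}$, which it does under $j\mapsto n-j$ since $n-(n-k)=k$.
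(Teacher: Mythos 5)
Your proposal is correct and follows the paper's approach very closely in part~(a) and in the positivity argument of part~(b): both use the partial-fraction decomposition with $a=0$ (resp.\ $q=0$) from part~(a), the same closed form $c_k = -\prod_{j=1}^n(\mu_k-\lambda_j)\big/\prod_{j\neq k}(\mu_k-\mu_j)$, and the same count of negative factors forced by the interlacing. The paper obtains the residue by plugging $x = \i\mu_k$ into the cleared-denominator identity $f(x) = xg(x) + \sum_j c_j g(x)/(x-\i\mu_j)$, whereas you invoke $c_j = f(\i\mu_j)/g'(\i\mu_j)$ directly; these are the same computation, and the $\i^n/\i^{n-2}=-1$ bookkeeping you flag is exactly what the paper does.

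Where you genuinely diverge — and improve on the paper — is the symmetry $c_k = c_{n-k}$. The paper splits into cases by parity of $n$, writes $n=2\ell$, subdivides further by whether $n-k\geq \ell+1$ or $n-k\leq\ell$, and breaks each product into two halves before reassembling. Your argument is a single uniform reindexing: in the numerator of $c_{n-k}$, substitute $\mu_{n-k}=-\mu_k$, $\lambda_j=-\lambda_{n+1-j}$, and reindex by $j\mapsto n+1-j$ (a bijection of $\{1,\dots,n\}$) to produce $(-1)^n\prod_{j}(\mu_k-\lambda_j)$; in the denominator, substitute $\mu_j=-\mu_{n-j}$ and reindex by $j\mapsto n-j$, which bijects $\{1,\dots,n-1\}\setminus\{n-k\}$ onto $\{1,\dots,n-1\}\setminus\{k\}$, giving $(-1)^{n-2}\prod_{j\neq k}(\mu_k-\mu_j)$; then $(-1)^n/(-1)^{n-2}=1$ cancels. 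This handles all parities of $n$ and all $k$ at once and is noticeably cleaner than the paper's case analysis. You correctly identified the only delicate point, namely that the reindexing really is a bijection of the restricted index set, and your check $n-(n-k)=k$ settles it.
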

\begin{proof}
(a) The coefficient of $x^{n-1}$ in $f(x)$ is $-\sum_{j=1}^n \i \lambda_j$. Since $\lambda_j=-\lambda_{n+1-j}$ for all $j=1,\ldots,n$, we have $-\sum_{j=1}^n \i \lambda_j=0$. Similarly the coefficient of $x^{n-2}$ in $g(x)$ is zero.

(b) Since all the roots of $g(x)$ are distinct, by the partial fraction decomposition we get 
\begin{equation}\label{partialeffovergee}
\displaystyle\frac{f(x)}{g(x)} = x+a + \sum_{j=1}^{n-1} \frac{c_j}{x-\i \mu_j}
\end{equation}
for some complex numbers $a,c_j$. Since the coefficient of $x^{n-1}$ in $f(x)$ and the coefficient of $x^{n-2}$ in $g(x)$ are zero, by direct division of $f(x)$ by $g(x)$, we have $a=0$, thus (\ref{partialeffovergee}) becomes
\begin{equation}\label{secondpartialeffovergee}
\displaystyle\frac{f(x)}{g(x)}=x+\sum_{j=1}^{n-1}\frac{c_j}{x-\i \mu_j}
\end{equation}
Multiplying both sides of (\ref{secondpartialeffovergee}) by $g(x)$ we get 
\begin{equation}\label{thislastequation}
f(x)=xg(x)+\sum_{j=1}^{n-1}\frac{c_jg(x)}{x-\i \mu_j}
\end{equation}
Plugging $x=\i \mu_k$ in (\ref{thislastequation}), we get 
\[f(\i \mu_k)=\displaystyle\sum_{j=1}^{n-1}\frac{c_jg(\i \mu_k)}{\i \mu_k- \i \mu_j} = \i^{n-2} c_k \prod_{\substack{1\leq j\leq n-1\\j\neq k}}(\mu_k-\mu_j).\]
But by the definition of $f$, $f(\i \mu_k)=\prod_{j=1}^n(\i \mu_k- \i \lambda_j) = \i^n \prod_{j=1}^n(\mu_k-\lambda_j)$. Thus 
\begin{equation}\label{ceekay}
c_k = \displaystyle\frac{\i^n \displaystyle\prod_{j=1}^n(\mu_k-\lambda_j)}{\i^{n-2} \displaystyle\prod_{\substack{1\leq j\leq n-1\\j\neq k}}(\mu_k-\mu_j)}=-\displaystyle\frac{\displaystyle\prod_{j=1}^n(\mu_k-\lambda_j)}{\displaystyle\prod_{\substack{1\leq j\leq n-1\\j\neq k}}(\mu_k-\mu_j)}.
\end{equation}
Since $\lambda_1< \mu_1< \lambda_2<\cdots <\mu_{n-1}< \lambda_n,$ in (\ref{ceekay}) the product in the numerator has exactly $n-k$ negative terms and the product in the denominator has exactly $n-k-1$ negative terms. Thus $c_k$ is a positive real number.

Now we show that  $c_k=c_{n-k}$ for $k=1,\ldots,n-1$. Observe that in (\ref{ceekay}), $\lambda_j=-\lambda_{n+1-j}$, for $j=1,\ldots,n$, and $\mu_k=-\mu_{n-k}$, for $k=1,\ldots,n-1$.  There are two cases according to the parity of $n$. We consider the case when $n$ is even, and it follows similarly when $n$ is odd. Let $n=2l$ for some positive integer $l$. Suppose that $n-k=2l-k\geq l+1$. Since $\mu_{n-k} = - \mu_k$,
\[ c_{n-k}=-\displaystyle\frac{\displaystyle\prod_{j=1}^{n}(\mu_{n-k}-\lambda_j)}{\displaystyle\prod_{\substack{1\leq j\leq n-1\\j\neq n-k}}(\mu_{n-k}-\mu_j)}
=-\displaystyle\frac{\displaystyle\prod_{j=1}^{n}(-\mu_{k}-\lambda_j)}{\displaystyle\prod_{\substack{1\leq j\leq n-1\\j\neq n-k}}(-\mu_{k}-\mu_j)}. \]
Break each of the products in the right hand side above into two halves to get 
\[  c_{n-k}= -\displaystyle\frac{\displaystyle\prod_{j=1}^{l}(-\mu_{k}-\lambda_j)\displaystyle\prod_{j=l+1}^{2l}(-\mu_{k}-\lambda_j)}{\displaystyle\prod_{1\leq j\leq l}(-\mu_{k}-\mu_j)\prod_{\substack{l+1\leq j\leq 2l-1\\j\neq 2l-k}}(-\mu_{k}-\mu_j)}. \]
Since $\lambda_{n+1-k} = - \lambda_k$, by reordering the products we have
\[  c_{n-k}= -\displaystyle\frac{\displaystyle\prod_{j=l+1}^{2l}(-\mu_{k}+\lambda_j)\displaystyle\prod_{j=1}^{l}(-\mu_{k}+\lambda_j)}{\displaystyle\prod_{l+1\leq j\leq 2l-1}(-\mu_{k}+\mu_j)\prod_{\substack{1\leq j\leq l\\j\neq k}}(-\mu_{k}+\mu_j)}.\]
Factor a $-1$ from each term of each product 
\[c_{n-k}=-\displaystyle\frac{ \left[ (-1)^l\displaystyle\prod_{j=l+1}^{2l}(\mu_{k}-\lambda_j)\right] \left[ (-1)^l\displaystyle\prod_{j=1}^{l}(\mu_{k}-\lambda_j)\right] }{\left[(-1)^{l-1}\displaystyle\prod_{l+1\leq j\leq 2l-1}(\mu_{k}-\mu_j)\right]\left[(-1)^{l-1}\displaystyle\prod_{\substack{1\leq j\leq l\\j\neq k}}(\mu_{k}-\mu_j)\right]},\]
Now multiply all $-1$'s to get
\[c_{n-k}=-\displaystyle\frac{\displaystyle\prod_{j=l+1}^{2l}(\mu_{k}-\lambda_j)\displaystyle\prod_{j=1}^{l}(\mu_{k}-\lambda_j)}{\displaystyle\prod_{l+1\leq j\leq 2l-1}(\mu_{k}-\mu_j)\displaystyle\prod_{\substack{1\leq j\leq l\\j\neq k}}(\mu_{k}-\mu_j)}=-\displaystyle\frac{\displaystyle\prod_{j=1}^{2l}(\mu_{k}-\lambda_j)}{\displaystyle\prod_{\substack{1\leq j\leq 2l-1\\j\neq k}}(\mu_{k}-\mu_j)}=c_k.\]

If $n-k=2l-k\leq l$, it can be proved similarly. Also, the case for $n$ odd follows similarly.
\end{proof}

The following Lemma may be proved using techniques similar to that of the proof of Lemma 2 in \cite{parter}. A similar lemma is used in \cite{duarte} in the case of Hermitian matrix $A$  whose graph is a tree.
\begin{lemma}\label{lastlemma}
Let $T$ be a tree on $n$ vertices $1,\ldots,n$, with $1\leq v\leq n$. Let $A=[a_{k,l}]$ be a skew-symmetric matrix such that $G(A)=T$. Then 
\[\displaystyle\frac{C_{A}(x)}{C_{A(v)}(x)} = x + \displaystyle\sum_{j\in N(v)} a_{vj}^2 \displaystyle\frac{C_{A_{j'}(v)}(x)}{C_{A_{j}(v)}(x)}.\]
\end{lemma}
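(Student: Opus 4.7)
The plan is to apply the Schur complement to the matrix pencil $xI-A$ with the partition that isolates row and column $v$. After permuting to put $v$ in position $1$, we have
\[A = \begin{pmatrix} 0 & c^T \\ -c & A(v) \end{pmatrix}, \qquad xI - A = \begin{pmatrix} x & -c^T \\ c & xI - A(v) \end{pmatrix},\]
where $c$ is the vector whose $j$-th entry is $a_{vj}$, and is therefore supported on $N(v)$. The standard Schur complement identity applied to this $1 + (n-1)$ block form then yields
\[C_A(x) = C_{A(v)}(x)\bigl(x + c^T(xI - A(v))^{-1}c\bigr),\]
valid wherever $xI - A(v)$ is invertible, and hence (after clearing denominators) as an identity of polynomials in $x$.

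The key step is to exploit the tree structure in order to simplify the quadratic form. Because $T$ is a tree, the forest $T(v)$ decomposes into the subtrees $T_j(v)$ for $j \in N(v)$, so $A(v)$ is, up to a simultaneous permutation of rows and columns, block-diagonal with blocks $A_j(v)$. Consequently $(xI-A(v))^{-1}$ is block-diagonal with blocks $(xI-A_j(v))^{-1}$, and since two distinct neighbors of $v$ lie in distinct components of $T(v)$, the off-diagonal cross terms in $c^T(xI-A(v))^{-1}c$ all vanish. Only the diagonal contributions indexed by $N(v)$ survive, giving
\[c^T(xI-A(v))^{-1}c = \sum_{j \in N(v)} a_{vj}^2\, \bigl[(xI-A_j(v))^{-1}\bigr]_{jj}.\]

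Finally, I would invoke Cramer's rule to rewrite each diagonal entry. The $(j,j)$ entry of $(xI-A_j(v))^{-1}$ equals $C_{A_{j'}(v)}(x)/C_{A_j(v)}(x)$, because deleting the row and column indexed by vertex $j$ from $xI-A_j(v)$ produces $xI-A_{j'}(v)$, by the very definition of $A_{j'}(v)$. Combining this with the two displays above yields the stated identity.

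There is no substantive obstacle here; the argument is essentially bookkeeping. The tidy points to verify are: (i) that skew-symmetry contributes $a_{vj}\cdot a_{jv} = -a_{vj}^2$, but the minus sign gets absorbed by the $-c$ block in the Schur complement, so the coefficient is correctly $+a_{vj}^2$; and (ii) that the rational-function identity, established on the cofinite set where $xI - A(v)$ is nonsingular, extends to all $x$ as an identity after multiplying both sides by $C_{A(v)}(x)\prod_{j\in N(v)} C_{A_j(v)}(x)$.
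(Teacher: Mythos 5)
The paper supplies no proof for this lemma; it only cites Parter's Lemma~2 and Duarte's Hermitian analogue, both of which proceed by Laplace (cofactor) expansion of $\det(xI-A)$ along the row and column indexed by $v$. Your Schur-complement argument is the same computation in a tidier package: isolating $v$ gives $C_A(x) = C_{A(v)}(x)\bigl(x + c^T(xI - A(v))^{-1}c\bigr)$; the tree structure makes $A(v)$ block-diagonal over the components $T_j(v)$, and since each neighbor $j$ lies in its own component the resolvent quadratic form reduces to $\sum_{j\in N(v)} a_{vj}^2\,[(xI-A_j(v))^{-1}]_{jj}$; and Cramer's rule converts each diagonal resolvent entry into $C_{A_{j'}(v)}(x)/C_{A_j(v)}(x)$. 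Your sign check is right --- the $-c$ in the $(2,1)$ block of $xI-A$ flips the usual $-b^TD^{-1}d$ in the Schur complement to $+c^T(xI-A(v))^{-1}c$, so skew-symmetry delivers $+a_{vj}^2$ --- and clearing denominators does promote the rational identity, valid on the cofinite set where $xI-A(v)$ is invertible, to a polynomial identity. Your proof is correct, complete, and fills a step the paper leaves to the reader.
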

For further details on the characteristic polynomial of the skew-adjacency matrix of a graph see \cite{gregory}. Now we have all the tools to prove the main theorem of this section (Theorem \ref{main}). Recall that, we want to prove if $T$ is a tree on $n$ vertices, $\lambda_1 < \mu_1 < \lambda_2 < \cdots < \mu_{n-1} < \lambda_n$ are $2n-1$ real numbers such that $\lambda_j=-\lambda_{n+1-j}$ and $\mu_k=-\mu_{n-k}$, and if $T$ is NEB at a vertex $v$, then there exists a real skew-symmetric matrix $A$ in $S^-(T)$ with eigenvalues $\i \lambda_1,\i \lambda_2, \ldots, \i \lambda_n$ such that the eigenvalues of $A(v)$ are $\i \mu_1,\i \mu_2, \ldots, \i \mu_{n-1}$.

\begin{proof}[Proof of Theorem $\ref{main}$] We prove this by induction on $n$.\\
For $n=2$, $\mu_1=0$ and the desired matrix is $\left[\begin{array}{cc}0&\lambda_1\\-\lambda_1&0\end{array}\right]$.

Now assume that the result is true for all $p<n$. There are two cases for $n$, we prove the result when $n$ is even, the case when $n$ is odd follows similarly.

Without loss of generality assume that, $v=1$ and $ N(v)=\{2,\ldots, m\}$. For each $j\in N(v)$ let $g_j(x)$ be the monic polynomial such that $\deg(g_j)=|T_j(v)|$ and roots of $g_j$ are $0$ or complex conjugate purely imaginary numbers. Let $g=g_2\cdots g_m$ such that $g(x)=\prod_{j=1}^{n-1}(x-\i\mu_j)$, this is possible because $T$ is NEB at $v$. Let $f(x)=\prod_{j=1}^n(x-\i \lambda_j)$. By the partial fraction decomposition we get 
\[\displaystyle\frac{f(x)}{g(x)}=x+a+\sum_{j\in N(v)} y_j\frac{h_j(x)}{g_j(x)},\] 
for some complex numbers $a,y_2,\ldots,y_m$ and unique monic polynomials $h_2,\ldots,h_m$ with $\deg h_j<\deg g_j$, for each $j\in N(v)$. By Lemma \ref{pfd}{\rm(a)}, we get $a=0$ and then
\begin{equation}\label{effovergee}
\displaystyle\frac{f(x)}{g(x)}=x+\sum_{j\in N(v)} y_j\frac{h_j(x)}{g_j(x)}.
\end{equation}
Now we use the following claims which are proved at the end of this proof.
\begin{enumerate}
	\item[Claim 1.] For each $j\in N(v)$, $y_j$ is a positive real number.
	\item[Claim 2.] The polynomial $h_j(x)$ is a real polynomial in $x$ for each $j=2,\ldots,m$. Moreover, if $\deg(h_j)$ is even, then the coefficients of the odd powers of $x$ in $h_j(x)$ are zero, and if $\deg(h_j)$ is odd, then the coefficients of the even powers of $x$ in $h_j(x)$ are zero.
	\item[Claim 3.] For each $j=2,\ldots,m$, $h_j$ has $\deg g_j-1$ distinct roots and the roots of $h_j$ strictly interlace the roots of $g_j$.
\end{enumerate}

Since $T_j(v)$ is a NEB tree at $j$, by the induction hypothesis, there exists $B_j\in S^-(T_j(v))$ with characteristic polynomial $g_j$ such that $h_j$ is the characteristic polynomial of $B_{j'}(v)$.

Now define an $n\times n$ skew-symmetric matrix $A=[a_{j,k}]$ such that $a_{v,j}=-a_{j,v}=\sqrt{y_j}$, $A_{j}(v)=B_j$ for $j\in N(v)$, and  all other entries of $A$ are zero. 

By construction of $A$, $g$ is the characteristic polynomial of $A(v)$. Finally by Lemma \ref{lastlemma}, $A$ has eigenvalues $\i \lambda_1,\i \lambda_2, \ldots, \i \lambda_n$.

\begin{proof}[Proof of Claim $1$]
There are two cases according to the parity of $\deg g_j$. First, let $\deg g_j=2l$ for some positive integer $l$. Let 
\[g_j(x)=\prod_{r=1}^l(x-\i \mu_{k_r})(x- \i \mu_{n-{k_r}}).\] 
Thus 
\begin{equation}\label{whyechovergee}
y_j\displaystyle\frac{h_j(x)}{g_j(x)}=\sum_{r=1}^l\frac{c_{k_r}}{x-\i \mu_{k_r}}+\frac{c_{n-k_{r}}}{x-\i \mu_{n-k_r}},
\end{equation}
for some complex numbers $c_{k_1},\ldots, c_{k_{l}},c_{n-k_1},\ldots, c_{n-k_{l}}$. By Lemma \ref{pfd}(b), $c_{k_1},\ldots, c_{k_{l}},c_{n-k_1},\ldots, c_{n-k_{l}}$ are positive real numbers. Note that from (\ref{whyechovergee}) we have $y_j=\sum_{r=1}^lc_{k_r}+c_{n-k_{r}}$. Since $c_{k_1},\ldots c_{k_l}$ are positive real numbers, $y_j>0$. When $\deg g_j=2l+1$, for some positive integer $l$,  the only other factor of $g_j$ is $x$, hence
\begin{equation}\label{whyechovergeeodd}
y_j\displaystyle\frac{h_j(x)}{g_j(x)}=\frac{c_0}{x}+\sum_{r=1}^l\frac{c_{k_r}}{x-\i \mu_{k_r}}+\frac{c_{n-k_{r}}}{x-\i \mu_{n-k_r}},
\end{equation}
and the claim follows similarly.
\end{proof}

\begin{proof}[Proof of Claim $2$]
Recall that $c_k=-c_{n-k}$ and $\mu_k=-\mu_{n-k}$ for $k=1,\ldots,n-1$. Assume that $\deg(g_j)$ is even. From (\ref{whyechovergee}) we have
\begin{align}
y_j\displaystyle\frac{h_j(x)}{g_j(x)}&=\sum_{r=1}^l\frac{c_{k_r}}{x-\i \mu_{k_r}}+\frac{c_{k_{r}}}{x+\i \mu_{k_r}} = \sum_{r=1}^l\frac{2c_{k_r}x}{x^2+\mu_{k_r}^2} \\ &= \displaystyle\frac{x \displaystyle\sum_{r=1}^{l}2c_{k_r}\displaystyle\prod_{\substack {s=1 \\ s\neq r}}^{l} (x^2+\mu_{k_s}^2)}{\displaystyle\prod_{r=1}^{l} (x^2+\mu_{k_r}^2)}  = \frac{x}{g_{j}(x)} \displaystyle\sum_{r=1}^{l}2c_{k_r}\displaystyle\prod_{\substack {s=1 \\ s\neq r}}^{l} (x^2+\mu_{k_s}^2)
\end{align}
Hence, \[h_j(x) = \dfrac{x}{y_j} \displaystyle\sum_{r=1}^{l}2c_{k_r} \displaystyle\prod_{\substack {s=1 \\ s\neq r}}^{l} (x^2+\mu_{k_s}^2).\]
Since $y_j,c_{k_1},\ldots,c_{k_l}$ are real numbers, $h_j(x)$ is a real polynomial of odd degree, and the coefficients of the even powers of $x$ in $h_j(x)$ are zero. Similarly, when $\deg(g_j)$ is odd, $h_j$ is a real polynomial of even degree and the coefficients of the odd powers of $x$ in $h_j(x)$ are zero.
\end{proof}

\begin{proof}[Proof of Claim $3$]
Let $\mu_ri$ be the smallest root of $g_j$ and $\mu_{r+p}i$ be the second smallest root of $g_j$. Then from (\ref{effovergee}) we have
\begin{equation}\label{effofmuareeye}
f(\mu_ri)=g(\mu_ri)\cdot y_j\displaystyle\frac{h_j(\mu_ri)}{g_j(\mu_ri)}=y_jh_j(\mu_ri)\sum_{\substack{t=2\\t\neq j}}^mg_t(\mu_ri).
\end{equation}  
Similarly from (\ref{effovergee}) we have
\begin{equation}\label{effofmuareeyepluspee}
f(\mu_{r+p}i)=y_jh_j(\mu_{r+p}i)\sum_{\substack{t=2\\t\neq j}}^mg_t(\mu_{r+p}i).\end{equation} 
Let $R_j$ be the set of the all roots of $\dfrac{g(x)}{g_j(x)}$ for each $j=2,\ldots,m$. Then by (\ref{effofmuareeye}) and (\ref{effofmuareeyepluspee}) we have 
\begin{equation}\label{secondeffofmuareeye}
f(\mu_ri)=y_jh_j(\mu_ri)\sum_{\mu\notin R_j}(\mu_r-\mu)i^{n-n_j},
\end{equation}
\begin{equation}\label{secondeffofmuareeyepluspee}
f(\mu_{r+p}i)=y_jh_j(\mu_{r+p}i)\sum_{\mu\notin R_j}(\mu_{r+p}-\mu)i^{n-n_j}.
\end{equation}
We know that $f(x)=\prod_{k=1}^n(x-i\lambda_k)$. Then we have
\begin{equation}\label{thirdeffofmuareeye}
f(\mu_ri)=\displaystyle\prod_{k=1}^n(\mu_ri-\lambda_ki)=i^n\prod_{k=1}^n(\mu_r-\lambda_k),
\end{equation}
\begin{equation}\label{thirdeffofmuareeyepluspee}
f(\mu_{r+p}i)=\displaystyle\prod_{k=1}^n(\mu_{r+p}i-\lambda_ki)=i^n\prod_{k=1}^n(\mu_{r+p}-\lambda_k).
\end{equation}
Since $n$ is even, $f(\mu_ri)$ and $f(\mu_{r+p}i)$ are real numbers (If $n$ is odd, then $f(\mu_ri)$ and $f(\mu_{r+p}i)$ are purely imaginary numbers). If $p$ is odd, then there are $p$ $\lambda$'s between $\mu_r$ and $\mu_{r+p}$. Then by (\ref{thirdeffofmuareeye}) and (\ref{thirdeffofmuareeyepluspee}), $f(\mu_ri)$ and $f(\mu_{r+p}i)$ have the opposite signs. 
Now by direct counting of $\mu$'s, $\sum_{\mu\notin R_j}(\mu_r-\mu)$ and $\sum_{\mu\notin R_j}(\mu_{r+p}-\mu)$ have the same sign. Thus by (\ref{secondeffofmuareeye}) and (\ref{secondeffofmuareeyepluspee}), $h_j(\mu_ri)$ and $h_j(\mu_{r+p}i)$ have the opposite signs. Similarly when $p$ is even we can show that $h_j(\mu_ri)$ and $h_j(\mu_{r+p}i)$ have the opposite signs. First note that, by Claim 2 and Lemma \ref{polynomial}, $g_j$ is a real polynomial that maps $\i\mathbb{R}$ to either $\i\mathbb{R}$ or $\mathbb{R}$. Then by the Intermediate Value Theorem, $h_j$ has a purely imaginary root between each two consecutive roots of $g_j$. Thus $\deg h_j=\deg g_j-1$ and the roots of $h_j$ strictly interlace the roots of $g_j$.
\end{proof}
This completes the proof.
\end{proof}

By the construction of $A$ in the proof of the preceding theorem, it is clear that $A$ has the Duarte property with respect to vertex $v$.

\begin{corollary}\label{NEB2Duarte}
The matrix $A$ constructed in the proof of Theorem $\ref{main}$ has the Duarte property with respect to vertex $v$.
\end{corollary}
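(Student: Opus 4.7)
My plan is to argue by induction on $n$, reading the two clauses of Definition \ref{DuarteProperty} straight off the recursive construction of $A$ in the proof of Theorem \ref{main}.

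In the base case $n = 2$, the matrix produced is $\begin{bmatrix} 0 & \lambda_n \\ -\lambda_n & 0 \end{bmatrix}$; the submatrix $A(v) = [0]$ has a single vertex, so it satisfies the Duarte property vacuously, while its spectrum $\{0\}$ strictly interlaces $\{\pm\i\lambda_n\}$. Hence $A$ has the Duarte property at $v$.

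For the inductive step I would assume the corollary for every NEB tree on fewer than $n$ vertices and examine the matrix $A$ produced by the construction. Clause (i) of the Duarte property --- strict interlacing of $\sigma(A(v))$ with $\sigma(A)$ --- is immediate, since by design $\sigma(A) = \{\i\lambda_1,\ldots,\i\lambda_n\}$ and $\sigma(A(v)) = \{\i\mu_1,\ldots,\i\mu_{n-1}\}$, and the strict interlacing of these sets was part of the hypothesis of Theorem \ref{main}. Clause (ii) demands that for each neighbor $j \in N(v)$ the submatrix $A_j(v) = B_j$ have the Duarte property at $j$. Here I would recognize $B_j$ as the output of the same construction applied to the strictly smaller tree $T_j(v)$, with prescribed characteristic polynomials $g_j$ (for $B_j$) and $h_j$ (for $B_{j'}(v)$). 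The hypotheses of Theorem \ref{main} for this smaller instance hold because $T_j(v)$ is NEB at $j$ by Definition \ref{NEBtree}, the roots of $g_j$ and $h_j$ satisfy the required conjugate-pair symmetry (the content of Claim 2 inside the proof of Theorem \ref{main}), and the strict interlacing of the roots of $h_j$ with those of $g_j$ was established in Claim 3. Invoking the inductive hypothesis then yields the Duarte property of $B_j$ at $j$, and the induction closes.

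The argument is essentially bookkeeping; I do not expect a genuine obstacle. The point worth emphasizing is that the recursion used to build $A$ aligns precisely with the recursion defining the Duarte property, and that Claims 2 and 3 inside the proof of Theorem \ref{main} package exactly the symmetry and interlacing data needed to re-enter that recursion at the next level.
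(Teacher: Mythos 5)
Your proposal is correct and matches what the paper leaves implicit when it asserts that the Duarte property is ``clear from the construction'': the recursion building $A$ in Theorem~\ref{main} (strict interlacing of $\sigma(A(v))$ against $\sigma(A)$ by choice of the $\lambda$'s and $\mu$'s, plus the recursive call producing $B_j$ on each $T_j(v)$ with data whose symmetry and strict interlacing are exactly Claims 2 and 3) mirrors the recursion in Definition~\ref{DuarteProperty}, so a straightforward induction on $n$ closes the argument. The only cosmetic point is that the base-case matrix in the paper uses $\lambda_1$ in the $(1,2)$ entry rather than $\lambda_n$; since $\lambda_1=-\lambda_2$ the two differ only by a sign and have identical spectrum.
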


\begin{remark}
For a tree $T$, Theorem \ref{Duarte2NEB} shows that if a matrix $A \in S^- (T)$ has the Duarte property with respect to a vertex $v$, then $T$ is NEB at $v$. Conversely, Corollary \ref{NEB2Duarte} shows that if $T$ is NEB at a vertex $v$, then there is an $A \in S^- (T)$ which is Duarte with respect to $v$.
\end{remark}

\begin{example}\label{exampleP4}

Let $T$ be the path $P_4$ on four vertices $1$,$2$,$3$ and $4$ where vertex $4$ is a pendent vertex. Consider seven real numbers $-2<-1.5<-1<0<1<1.5<2$. Following the proof of Theorem \ref{main}, we will find a $4\times 4$ real skew-symmetric matrix $A$ such that $G(A)=P_4$ , the eigenvalues of $A$ are $\pm i,\pm 2i$, and the eigenvalues of $A(4)$ are $0,\pm 1.5i$. An approximation for such matrix is given below.

\begin{minipage}[b]{0.45\textwidth}
\[A \simeq \left[\begin{array}{cccc}
0 & 1.206045 & 0 & 0 \\
-1.206045 & 0 & 0.8918826 & 0 \\
0 & -0.8918826 & 0 & 1.658312 \\
0 & 0 & -1.658312 & 0
\end{array}\right]\]
\end{minipage} \begin{minipage}[c]{0.45\textwidth}
\begin{flushright}
\begin{tikzpicture}[scale=.8,colorstyle/.style={circle, draw=black!100,fill=black!100, thick, inner sep=0pt, minimum size=1.2 mm}]
\node (1) at (1,1)[colorstyle,label=left:$1$]{};
\node (2) at (1,0)[colorstyle,label=left:$2$]{};
\node (3) at (2,0)[colorstyle,label=right:$3$]{};
\node (4) at (2,1)[colorstyle,label=right:$4$]{};
\draw [thick] (1)--(2)--(3)--(4);
\node () at (1.5,-1) {$P_4$};
\end{tikzpicture}
\end{flushright}
\end{minipage}\\	

It is easy to check that $A$ has the Duarte property with respect to vertex $4$: Eigenvalues of $A$ are approximately $\pm \i, \pm 2 \i$, eigenvalues of $A(4)$ are approximately $0, \pm 1.5 \i$, eigenvalues of $A(\{4,3\})$ are approximately $\pm 1.206045\i$, and finally, and the eigenvalue of $A(\{4,3,2\})$ is $0$. They satisfy the strict interlacing inequality conditions in the definition of the Duarte property.
\end{example}

A {\it matching} in a graph $G$ is a set of vertex-disjoint edges. A {\it maximum matching} in $G$ is a matching with the maximum number of edges among all matchings in $G$. The matching number, denoted by $\match(G)$, is the number of edges in a maximum matching in $G$. The following observation shows that if a tree $T$ is NEB at a vertex, then $\match(T)$ is as large as possible.

\begin{obs}\label{matching} Suppose $T$  is a tree of order $n$. If $T$ is NEB at a vertex, then $\match(T)=\lfloor \frac{n}{2}\rfloor$. 
\end{obs}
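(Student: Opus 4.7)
The plan is to prove the statement by induction on $n$, using the recursive definition of the NEB property. Since any matching of size $k$ in a graph on $n$ vertices uses $2k$ distinct vertices, the inequality $\match(T)\leq \lfloor n/2\rfloor$ is immediate, so only the lower bound $\match(T)\geq \lfloor n/2\rfloor$ needs work.

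To make the induction carry through cleanly, I would strengthen the statement slightly: if $T$ is a tree of order $n$ that is NEB at a vertex $v$, then $\match(T)=\lfloor n/2\rfloor$, and moreover, when $n$ is odd there is a maximum matching of $T$ that does not cover $v$. The base cases $n=1,2$ are obvious. For the inductive step, let $N(v)=\{v_1,\ldots,v_k\}$ and set $T_i:=T_{v_i}(v)$; by definition each $T_i$ is NEB at $v_i$ and has strictly fewer than $n$ vertices, so the inductive hypothesis applies to each $T_i$.

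The two cases to handle correspond to the parity of $n$. If $n$ is odd, the NEB condition forces every $T_i$ to have even order, so by induction each $T_i$ has a perfect matching $M_i$; taking $M=M_1\cup\cdots\cup M_k$ gives a matching of $T$ of size $\sum_i|T_i|/2=(n-1)/2$ that leaves $v$ uncovered, establishing both halves of the strengthened statement. If $n$ is even, the NEB condition gives exactly one index, say $i=1$, with $|T_1|$ odd, and $|T_i|$ even for $i\geq 2$. By induction each $T_i$ with $i\geq 2$ has a perfect matching $M_i$, and by the strengthened inductive hypothesis applied to $T_1$ (which is NEB at $v_1$ and has odd order) there is a maximum matching $M_1$ of $T_1$ of size $(|T_1|-1)/2$ that does not cover $v_1$. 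Adjoining the edge $\{v,v_1\}$ to $M_1\cup\cdots\cup M_k$ yields a matching of $T$ of size
\[
1+\frac{|T_1|-1}{2}+\sum_{i=2}^{k}\frac{|T_i|}{2}=\frac{1+\sum_{i=1}^{k}|T_i|}{2}=\frac{n}{2},
\]
completing the induction.

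The only subtle point in this plan is the need to strengthen the inductive hypothesis so that in the odd case the maximum matching can be chosen to miss the designated vertex $v$; without this one cannot guarantee that the pendant-like edge $\{v,v_1\}$ can be added in the even case. Because this strengthening is itself the natural output of the odd case of the induction, no real obstacle arises, and the argument is essentially a direct unwinding of Definition \ref{NEBtree}.
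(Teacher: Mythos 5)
Your argument is correct, but it takes a genuinely different route from the paper's. The paper proves this observation as a corollary of the heavy machinery already built: it invokes Theorem~\ref{main} to produce a matrix $A\in S^-(T)$ with $n$ distinct eigenvalues, and then uses the known bound $\rank A\leq 2\match(T)$ for skew-symmetric matrices supported on a graph. If $\match(T)<\lfloor n/2\rfloor$ this rank bound would force the eigenvalue $0$ to have multiplicity at least $2$, contradicting distinctness; hence $\match(T)=\lfloor n/2\rfloor$. Your proof, by contrast, is a purely combinatorial induction that unwinds Definition~\ref{NEBtree} directly, strengthening the inductive hypothesis (``in the odd case a maximum matching can avoid the NEB vertex'') so that the edge $\{v,v_1\}$ into the unique odd branch can be adjoined. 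Both are valid; yours is more elementary and self-contained and in fact extracts more information (it exhibits an explicit maximum matching and shows the root vertex can be avoided when $n$ is odd), while the paper's is shorter \emph{given} Theorem~\ref{main} and the rank result, and emphasizes the spectral consequence that motivates the observation in the first place. One small remark: the strengthened clause you add is not merely convenient but essential, exactly as you note, and it does close the induction because in the odd case your constructed matching of size $(n-1)/2$ avoiding $v$ is automatically maximum by the trivial upper bound.
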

\begin{proof}
Suppose that $T$ is NEB at a vertex. Then by Theorem \ref{main}, we can find some $A$ in $S^-(T)$ with distinct eigenvalues. Note that any $A$ in $S^-(T)$ has rank less than or equal to $2\match(T)$ \cite[Theorem 2.5]{skewrank}. If $\match(T)<\lfloor \frac{n}{2}\rfloor$, then for any $A$ in $S^-(T)$ the multiplicity of the eigenvalue $0$ of $A$ is at least $2$. Thus $\match(T)=\lfloor \frac{n}{2}\rfloor$. ​
\end{proof}

\section{A polynomial map and its Jacobian}\label{sectionjacobian}
For the remainder of the paper fix $T$ to be a NEB tree at vertex $n$. Assume $T$ has vertices $1,2,\ldots, n$ and edges $e_1=\{i_1, j_1\}$, \ldots, $e_{n-1}= \{i_{n-1}, j_{n-1}\}$, where $i_k < j_k$ for $k=1,\ldots,n-1$.  Let $x_1, x_2, \ldots, x_{n-1}$ be $n-1$ independent indeterminates, and set 
\[x=(x_1,x_2, \ldots, x_{n-1}).\]
Define $M(x)$ to be the matrix with $x_{k}$ in the $(i_k,j_k)$ and $-x_{k}$ in the $(j_k,i_k)$ positions $(k=1,2,\ldots, n-1)$, and zeros elsewhere.  
Set $N(x)= M(x)(n)$; that is, $N(x)$ is the principal submatrix obtained from $M(x)$ by deleting its last row and column. We denote these matrices  by $M$ and $N$ for short.

\begin{example}\label{exampleP4MN}
Consider the tree $T$ from Example \ref{exampleP4}. The adjacency matrix of $T$ is

\[\left[\begin{array}{cccccc}
0 & 1 & 0 & 0\\
1 & 0 & 1 & 0\\
0 & 1 & 0 & 1\\
0 & 0 & 1 & 0\\
\end{array}\right].\]

Thus

\[M=\left[\begin{array}{cccc}
0&x_1&0&0\\
-x_1&0&x_2&0\\
0&-x_2&0&x_3\\
0&0&-x_3&0
\end{array}\right], \qquad N=M(4)=\left[\begin{array}{ccc}
0&x_1&0\\
-x_1&0&x_2\\
0&-x_2&0\\
\end{array}\right].\]
\end{example}

Suppose that $t^n + c_{1}t^{n-1} + \cdots +c_{n-1} t + c_n$ and 
$t^{n-1} + d_{1} t^{n-2} + \cdots + d_{n-2}t + d_{n-1}$ are the characteristic polynomials of $M$ and $N$, respectively. We now define four polynomial maps associated with $M$ and $N$. \\

Let $G:\mathbb{R}^{n-1}\rightarrow \mathbb{R}^{2n-1}$ be the polynomial map defined by
\begin{equation}\label{GEE}
G(x)=\left( c_1, c_{2}, \dots ,c_{n}, d_{1}, d_{2}, \dots , d_{n-1} \right).
\end{equation}

Since $M$ and $N$ are skew-symmetric matrices, $c_i$ and $d_i$ are zero for odd $i$. So, the function $G$ is mapping $\mathbb{R}^{n-1}$ to an $n-1$ dimensional subspace of $\mathbb{R}^{2n-1}$. So by restricting the codomain of $G$ we define a function $g:\mathbb{R}^{n-1}\rightarrow \mathbb{R}^{n-1}$ as follows.

\begin{equation}\label{gee}
g(x)=\left( c_2, c_4, \dots, d_{2}, d_{4}, \dots \right).
\end{equation}
 
The goal is to show that for a tree $T$ on $n$ vertices, and a matrix $A\in S^- (T)$ with the Duarte property with respect to the vertex $n$, the Jacobian of $g$ evaluated at the upper-triangular nonzero entries of $A$ is nonsingular. This enables us to use the Implicit Function Theorem, in order to perturb the zero entries of $A$, particularly making them nonzero, and to adjust the nonzero entries to obtain a new matrix $\widehat{A}$, such that  the characteristic polynomials of $\widehat{A}$ and $\widehat{A}(n)$ are equal to those of  $A$ and $A(n)$, respectively. That is, the graph of the matrix $\widehat{A}$ is a supergraph of $T$, and $\widehat{A}$ and $\widehat{A}(n)$ have the same eigenvalues as $A$ and $A(n)$, respectively.

It is not easy to show that the Jacobian of $g$ is nonsingular at some point. So, we introduce the following functions.\\

Let $F: \mathbb{R}^{n-1} \rightarrow \mathbb{R}^{2n-1}$ be the polynomial map defined by 
\begin{equation}
F(x) =  \left( \frac{\tr M}{2},\frac{\tr M^2}{4},\ldots ,\frac{\tr M^n}{2n},\frac{\tr N}{2},\frac{\tr N^2}{4},\ldots ,
\frac{\tr N^{n-1}}{2(n-1)} \right).
\end{equation}

Since $M,N$ are skew-symmetric matrices, for each $k$ we have $\tr M^{2k-1} = \tr N^{2k-1} = 0$, for all $x$. So by restricting the codomain of $F$ we define a function $f:\mathbb{R}^{n-1}\rightarrow \mathbb{R}^{n-1}$ as follows.
\begin{equation}\label{eff}
f(x) = \begin{cases}  \left( \frac{\tr M^2}{4},\frac{\tr M^4}{8}\ldots ,\frac{\tr M^{2m}}{4m},\frac{\tr N^2}{4},\frac{\tr N^4}{8}, \ldots , \frac{\tr N^{2(m-1)}}{4(m-1)} \right) & \mbox{ if } n=2m,\\ \\
\left( \frac{\tr M^2}{4},\frac{\tr M^4}{8}\ldots ,\frac{\tr M^{2m}}{4m},\frac{\tr N^2}{4},\frac{\tr N^4}{8}, \ldots , \frac{\tr N^{2m}}{4m} \right) & \mbox{ if } n=2m+1.
\end{cases}
\end{equation}

\begin{example}\label{exampleP4MNEff}
Consider the matrices $M$ and $N$ from Example \ref{exampleP4MN}. Then 

\[ g(x_1,x_2,x_3)= \left( x_1^2 + x_2^2 + x_3^2, x_1^2 x_3^2, x_1^2 + x_2^2 \right),\]
and 
\[f(x_1,x_2,x_3)= \left(-\frac{x_{1}^{2} +   x_{2}^{2} + x_{3}^{2}}{2}, \frac{(x_{1}^{4} + x_{2}^{4} + x_{3}^{4}) + 2 x_{2}^{2} ( x_{1}^{2} + x_{3}^{2} )}{4}, -\frac{x_1^2 + x_2^2}{2} \right).\]
\end{example}

Next, we give a closed formula for the Jacobian matrix of $f$ evaluated at a certain point, and then we show that the above Jacobian matrix is nonsingular whenever $A$ has the Duarte-property with respect to $n$. As it is mentioned in Section 3 of \cite{ms}, note that by Newton's identities, there is an infinitely differentiable,  invertible $h: \mathbb{R}^{n-1} \rightarrow \mathbb{R}^{n-1}$ such that $g\circ h=f$.  Thus, the Jacobian matrix of $f$ at a point $x$ is nonsingular if and only if the Jacobian matrix of $g$ at $h(x)$ is nonsingular.

\begin{lemma}
\label{derivatives}
Let $k$ be a positive even integer and $(i,j)$ be  a nonzero position of $M$ with corresponding variable $x_{t}$. Then 
\begin{itemize}
\item[\rm (a)] 
$\displaystyle \frac{\partial}{\partial x_{t}} \left( \tr M^k \right)= - 2k \left( M^{k-1} \right)_{ij}$, and
\item[\rm (b)] 
 $ \displaystyle
 \frac{\partial}{\partial x_{t}}\left( \tr {N}^k \right)=  \left\{ \begin{array}{ll} - 2k \left( N^{k-1} \right)_{ij} & \mbox{ if neither $i$ nor $j$ is $n$}
\\
0 & \mbox{ otherwise.} \end{array} \right.
$
\end{itemize}
\end{lemma}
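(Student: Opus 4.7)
The plan is direct computation using the cyclic invariance of the trace. First I would write
\[
\frac{\partial}{\partial x_t}\tr M^k \;=\; k\,\tr\!\left(M^{k-1}\,\frac{\partial M}{\partial x_t}\right),
\]
which follows because $\tr M^k$ is a sum of products of entries of $M$, and cyclically permuting the $k$ factors in $\tr(M^{a}(\partial M/\partial x_t)M^{b})$ collects them into this single term.

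Next I would observe that since $x_t$ appears in $M$ exactly twice, namely $M_{ij}=x_t$ and $M_{ji}=-x_t$, one has $\partial M/\partial x_t = E_{ij}-E_{ji}$, where $E_{ab}$ denotes the matrix unit with a single $1$ in position $(a,b)$. Substituting this and using $\tr(C\,E_{ab}) = C_{ba}$ gives
\[
\frac{\partial}{\partial x_t}\tr M^k \;=\; k\bigl((M^{k-1})_{ji} - (M^{k-1})_{ij}\bigr).
\]
Since $M$ is skew-symmetric and $k-1$ is odd, $M^{k-1}$ is also skew-symmetric, so $(M^{k-1})_{ji} = -(M^{k-1})_{ij}$. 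This yields $\partial(\tr M^k)/\partial x_t = -2k(M^{k-1})_{ij}$, proving part (a).

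For part (b), the same derivation applies verbatim to $N$ in place of $M$, provided $x_t$ actually appears in $N$. Because $N = M(n)$ is obtained by deleting the last row and column of $M$, the entry $x_t$ survives in $N$ precisely when neither $i$ nor $j$ equals $n$; in that case the identical computation yields $-2k(N^{k-1})_{ij}$. If instead $i=n$ or $j=n$, then $N$ does not depend on $x_t$ at all, so $\partial(\tr N^k)/\partial x_t = 0$. This handles both cases of (b). The argument is purely mechanical — the only subtlety is remembering that $x_t$ occurs with opposite signs at the two symmetric positions of $M$, and that skew-symmetry of odd powers is what collapses the two terms into a single clean expression; there is no real obstacle.
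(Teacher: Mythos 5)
Your proof is correct and follows essentially the same route as the paper's: differentiate the trace via the chain rule and cyclicity, identify $\partial M/\partial x_t = E_{ij}-E_{ji}$, and invoke skew-symmetry of $M^{k-1}$ (odd power) to collapse the two terms, with the case analysis for $N$ hinging on whether $x_t$ survives deletion of row/column $n$. No discrepancies.
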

\begin{proof}
Without loss of generality, assume $i < j$. Let $E_{ij}$ be the matrix (of  appropriate size) with a $1$ in position $(i,j)$ and 
$0$s elsewhere. First, note that $$\frac{\partial}{\partial x_{t} } M = E_{ij}-E_{ji},$$ thus
\begin{align*}
\frac{\partial}{\partial x_{t} }\left( \tr (M^k) \right) &= \sum_{\ell=0}^{k-1}\tr\left( M^{\ell} \cdot  \frac{\partial}{\partial x_t} M \cdot M^{k-\ell-1} \right) & \mbox{{\footnotesize (by chain rule)}}\\
&= \sum_{\ell=0}^{k-1}\tr\left( M^{k-1} \cdot \frac{\partial}{\partial t} M  \right) & \mbox{{\footnotesize (\mbox{since} $\tr(AB)=\tr(BA)$ \mbox{for all matrices} $A$ \mbox{and} $B$})}\\
&= k \tr \left( M^{k-1}(E_{ij}-E_{ji}) \right)\\ 
&= k \left( (M^{k-1})_{ji} - (M^{k-1})_{ij} \right)\\ 
&= - 2k (M^{k-1})_{ij}. & \mbox{{\footnotesize (\mbox{since} $M^{k-1}$ is skew-symmetric)}}
\end{align*}
A similar argument works for $N$, provided we note that if $i$ or $j$ equals $n$, then $N$ does not contain $x_t$, and consequently $\displaystyle\frac{\partial}{\partial x_t} N= 0$. 
\end{proof}

We will use the following notations in the rest of this paper.

\begin{notation} \label{widetildenotation}
Given any $(n-1)\times (n-1)$ matrix $W$, we set
$$
\widetilde{W} = \left[ \begin{array}{c|c} W & \begin{array}{c} 0 \\  \vdots \\ 0 \end{array} \\ \hline
0  \; \cdots \; 0 & 0 \end{array} \right]
.$$
\end{notation}

\begin{notation}\label{Jacobiannotation}
Given a matrix $A=[a_{i,j}]\in  S^-(T)$ we denote by $ \jac(f) \;{\rule[-2.1mm]{.1mm}{6mm}}_{A}$ the matrix obtained from $\jac(f)$ by evaluating at $(x_1, \ldots, x_{n-1})$ where $x_k$ equals the corresponding entry of $A$, for $k=1,2,\ldots, n-1$. 
\end{notation}

Using Notations \ref{widetildenotation} and \ref{Jacobiannotation}, Lemma \ref{derivatives} implies the following. 
\begin{corollary}
\label{Jacobian}
Let $T$  be a tree defined as above  on $n$ vertices, and $A \in S^-(T)$, and let $B=A(n)$. Then

\begin{align*} -\jac(f) \;{\rule[-3.6mm]{.1mm}{8mm}}_{A}&= 
\left[ \begin{array}{cccc}
&&\\
A_{i_1j_1}&A_{i_2j_2}&\cdots& A_{i_{n-1} j_{n-1}}\\
A^3_{i_1j_1}  &A^3_{i_2j_2}  & \cdots & A^3_{i_{n-1}j_{n-1}}\\
\vdots&\vdots&\ddots&\vdots\\
A^{n-1}_{i_1j_1} & A^{n-1}_{i_2j_2} & \cdots & A^{n-1}_{i_{n-1}j_{n-1}}\\ &&\\\hline &&\\
\widetilde{B}_{i_1j_1} & \widetilde{B}_{i_2j_2} & \cdots&\widetilde{B}_{i_{n-1} j_{n-1}}\\
\widetilde{B}^3_{i_1j_1} &\widetilde{B}^3_{i_2j_2} & \cdots & \widetilde{B}^3_{i_{n-1}j_{n-1}}\\
\vdots&\vdots&\ddots&\vdots\\
\widetilde{B}^{n-3}_{i_1j_1} &\widetilde{B}^{n-3}_{i_2j_2} & \cdots & \widetilde{B}^{n-3}_{i_{n-1}j_{n-1}}
\end{array} \right] \mbox{ or } \left[ \begin{array}{cccc}
&&\\
A_{i_1j_1}&A_{i_2j_2}&\cdots& A_{i_{n-1} j_{n-1}}\\
A^3_{i_1j_1}  &A^3_{i_2j_2}  & \cdots & A^3_{i_{n-1}j_{n-1}}\\
\vdots&\vdots&\ddots&\vdots\\
A^{n-2}_{i_1j_1} & A^{n-2}_{i_2j_2} & \cdots & A^{n-2}_{i_{n-1}j_{n-1}}\\ &&\\\hline &&\\
\widetilde{B}_{i_1j_1} & \widetilde{B}_{i_2j_2} & \cdots&\widetilde{B}_{i_{n-1} j_{n-1}}\\
\widetilde{B}^3_{i_1j_1} &\widetilde{B}^3_{i_2j_2} & \cdots & \widetilde{B}^3_{i_{n-1}j_{n-1}}\\
\vdots&\vdots&\ddots&\vdots\\
\widetilde{B}^{n-2}_{i_1j_1} &\widetilde{B}^{n-2}_{i_2j_2} & \cdots & \widetilde{B}^{n-2}_{i_{n-1}j_{n-1}}
\end{array} \right]. \end{align*}
The former happens when $n$ is even, and the latter happens when $n$ is odd.
\end{corollary}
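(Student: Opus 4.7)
The plan is to derive the Jacobian matrix entry by entry by invoking Lemma \ref{derivatives} and then to reconcile the bookkeeping of which power of $A$ appears in which row. First I would observe the key evaluation fact: since $M(x)$ is constructed so that its $(i_k,j_k)$ entry is $x_k$ and the $(j_k,i_k)$ entry is $-x_k$, substituting $x_k = A_{i_k j_k}$ yields $M|_A = A$. The matrix $N(x) = M(x)(n)$ is the principal submatrix with the last row and column deleted, so after the same substitution we get $N|_A = A(n) = B$. Then the tilde notation simply restores the last row/column as zeros: $\widetilde{B}^{\,k}_{i j} = (B^k)_{ij}$ when both $i,j \le n-1$, and equals $0$ when either index equals $n$, which is exactly the dichotomy appearing in Lemma \ref{derivatives}(b).

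Next I would compute the generic entry. The $(r,t)$ entry of $\jac(f)$ for a row coming from $\tr M^{2k}/(4k)$ and variable $x_t$ is, by Lemma \ref{derivatives}(a),
\[
\frac{\partial}{\partial x_t}\!\left(\frac{\tr M^{2k}}{4k}\right)
= \frac{1}{4k}\cdot\bigl(-2\cdot 2k\bigr)\,(M^{2k-1})_{i_t j_t}
= -(M^{2k-1})_{i_t j_t},
\]
and evaluating at $A$ gives $-A^{2k-1}_{i_t j_t}$. The key point is that the normalization $1/(4k)$ in the definition of $f$ was chosen precisely to cancel the factor $-2(2k)$ from Lemma \ref{derivatives}, leaving a clean sign. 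The same computation for $\tr N^{2k}/(4k)$ produces $-(N^{2k-1})_{i_t j_t}$ when neither $i_t$ nor $j_t$ equals $n$, and $0$ otherwise; by the tilde identification above, both cases are encoded uniformly as $-\widetilde{B}^{\,2k-1}_{i_t j_t}$.

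It remains to match the range of $k$ (and hence which odd powers appear as row labels) to the parity of $n$. By the definition of $f$ in (\ref{eff}), when $n=2m$ the $M$-block uses $k=1,2,\ldots,m$, giving row powers $A^1, A^3,\ldots, A^{2m-1}=A^{n-1}$, and the $N$-block uses $k=1,\ldots,m-1$, giving $\widetilde{B}^{1},\widetilde{B}^{3},\ldots,\widetilde{B}^{2m-3}=\widetilde{B}^{n-3}$, producing the left-hand matrix in the statement. When $n=2m+1$ the $M$-block uses $k=1,\ldots,m$, giving row powers up to $A^{2m-1}=A^{n-2}$, and the $N$-block uses $k=1,\ldots,m$, giving $\widetilde{B}^{1},\ldots,\widetilde{B}^{2m-1}=\widetilde{B}^{n-2}$, which matches the right-hand matrix. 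Transposing the overall sign from $\jac(f)$ to $-\jac(f)$ then yields the claimed formula.

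Nothing in this derivation is substantive once Lemma \ref{derivatives} is in hand; the only thing that needs care is the indexing, in particular verifying that the highest power of $A$ and of $\widetilde{B}$ agree with the last components of $f$ in each of the two parity cases. I do not expect any real obstacle beyond that routine verification.
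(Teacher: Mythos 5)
Your proof is correct and follows essentially the same route as the paper, which presents this corollary as an immediate consequence of Lemma \ref{derivatives} without spelling out the indexing; you have simply filled in the routine bookkeeping that the paper leaves implicit, and the verification of how the $1/(4k)$ normalization cancels the $-4k$ factor and how the row ranges match the two parity cases of $n$ in equation~(\ref{eff}) is accurate.
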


\begin{theorem}\label{JAC}
Let $T$ be an NEB tree at vertex $n$. Let matrices $A$, $B$, and function $f$ (the function defined in terms of the traces of $M$ and $N$, equation $(\ref{eff})$) be defined as above. If $A$ has the Duarte-property with respect to vertex $n$,  then $\jac(f)\atA$ is nonsingular.
\end{theorem}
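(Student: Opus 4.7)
The plan is to proceed by contradiction: suppose $\jac(f)\atA$ is singular. By Corollary \ref{Jacobian}, some nontrivial real linear combination of its rows vanishes; equivalently, there exist real numbers $\alpha_k,\beta_k$ (indexed by the appropriate odd exponents, not all zero) such that, setting
\[ p(x)=\sum_k \alpha_k x^k, \qquad q(x)=\sum_k \beta_k x^k, \]
both odd polynomials, the matrix $X:=p(A)+q(\widetilde{B})$ vanishes in every edge position $(i_s,j_s)$ of $T$. The degree bounds in Corollary \ref{Jacobian} give $\deg p\le n-1$ and $\deg q\le n-2$ in either parity of $n$.

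The next step is to invoke Lemma \ref{cent}. Because $p,q$ are odd and $A,\widetilde{B}$ are skew-symmetric, $X$ is skew-symmetric with zero diagonal, and its vanishing on the edges of $T$ amounts to $A\circ X=O$. For the commutator condition, $[A,p(A)]=O$, so $[A,X]=[A,q(\widetilde{B})]$. Writing $A$ in block form with $B$ in the leading $(n-1)\times(n-1)$ block, and $\widetilde{B}$ having $B$ in its leading block with last row and column zero, a direct block multiplication shows that the leading $(n-1)\times(n-1)$ block of $[A,q(\widetilde{B})]$ equals $Bq(B)-q(B)B=O$. Hence $[A,X](n)=O$, and Lemma \ref{cent} yields $X=O$; that is, $p(A)=-q(\widetilde{B})$.

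Since $q(\widetilde{B})$ has zero last row and column, the equation $p(A)=-q(\widetilde{B})$ forces $p(A)$ to have zero last column, i.e.\ $p(A)$ annihilates the $n$-th standard basis vector. The Duarte property ensures strict interlacing of $\sigma(A(n))$ inside $\sigma(A)$, so $A$ has $n$ distinct eigenvalues $\i\lambda_1,\ldots,\i\lambda_n$; moreover a standard argument (if $Av=\i\lambda v$ had $v_n=0$, its first $n-1$ entries would give an eigenvector of $B$ with eigenvalue $\i\lambda$, contradicting strict interlacing) shows that every eigenvector of $A$ has nonzero $n$-th entry. Diagonalising $A=UDU^{*}$ by a unitary $U$, the equation on the $n$-th standard basis vector becomes $p(\i\lambda_j)\,\overline{u_{nj}}=0$ for each $j$, so $p$ vanishes at all $n$ eigenvalues of $A$; combined with $\deg p\le n-1$ this forces $p=0$. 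Then $q(B)=0$, and the same reasoning applied to $B$ (which has $n-1$ distinct eigenvalues strictly interlacing those of $A$, hence simple, with eigenvectors of $B$ being ordinary eigenvectors of a real skew-symmetric matrix with distinct spectrum) together with $\deg q\le n-2$ gives $q=0$, the desired contradiction.

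The main obstacle I anticipate is the block commutator computation: one must verify carefully that the zero last row and column of $\widetilde{B}$, combined with the block form of $A$, causes the $(n-1)\times(n-1)$ principal block of $[A,q(\widetilde{B})]$ to collapse to $Bq(B)-q(B)B$, so that Lemma \ref{cent} genuinely applies. Once this is in hand, the rest is a clean eigenvalue/degree-count argument powered by the Duarte property, closely paralleling the Hermitian template of \cite{ms}.
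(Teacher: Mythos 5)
Your proof is correct and follows the paper's route exactly up to the point where Lemma \ref{cent} yields $X=O$: you build the same odd polynomials $p,q$ from a putative left null vector of $\jac(f)\atA$, form $X=p(A)+q(\widetilde{B})$ (your $q(\widetilde{B})$ equals the paper's $\widetilde{q(B)}$ because $q$ is odd and hence has no constant term), verify $A\circ X=O$ and $[A,X](n)=O$ by the block computation, and invoke Lemma \ref{cent}. After that you diverge from the paper. The paper sets $Y:=p(A)=-\widetilde{q(B)}$, runs a block computation of $Ap(A)=p(A)A$ to deduce $AY=Y\widetilde{B}$, then invokes Lemma~1.1 of \cite{ms} to say that either $Y=O$ or $A$ and $\widetilde{B}$ share an eigenvalue, and rules out the second possibility by a generalized-eigenvector argument on a nonzero column of $Y$. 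You instead observe directly that $p(A)=-q(\widetilde{B})$ has zero last column, so $p(A)e_n=0$; you then use strict interlacing to show every eigenvector of $A$ has a nonzero $n$th entry, diagonalize, and conclude that $p$ vanishes at all $n$ distinct eigenvalues of $A$, forcing $p=0$ by the degree bound $\deg p\le n-1$; then $q(B)=O$ and the degree bound $\deg q\le n-2<n-1=\deg$(minimal polynomial of $B$) forces $q=0$. Your finale is shorter and more self-contained---it replaces the auxiliary commutation lemma and the generalized-eigenvector step with a single ``no eigenvector of $A$ vanishes at vertex $n$'' observation, which is itself an immediate consequence of strict interlacing---while the paper's version mirrors the symmetric-case template of \cite{ms} more literally. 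Both are valid; yours is arguably the cleaner endgame.
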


\begin{proof}
Note that $\jac(f)\atA$ is nonsingular if and only if  the only vector  
$\alpha=(\alpha_1,\alpha_2,\dots,\alpha_{n-1})^T$ such that $\alpha^T  \jac(f)\atA = (0, \ldots, 0)$ is the zero-vector.

Let $\jac(f)_k$ denote the $k^{\tiny\mbox{th}}$ row of $- \jac(f)\atA$. So $\alpha^T \jac(f)\atA= \sum_{k=1}^{n-1} \alpha_k \jac(f)_k$.  There are two cases:

\begin{enumerate}
	\item[Case 1.] $n=2m$. \\
	For $\ell = 1,\ldots, n-1$, the $\ell$-th entry in $\alpha^T \jac(f)\atA$ is the $(i_{\ell}, j_{\ell})$-entry of  \[ \sum_{r=1}^{m} \alpha_r A^{2r-1} + \sum_{r=1}^{m-1} \alpha_{m+r} \widetilde{B}^{2r-1}.\] 
In this case let $p(x) = \sum_{r=1}^{m} \alpha_r x^{2r-1}$, and $q(x) = \sum_{r=1}^{m-1} \alpha_{m+r} x^{2r-1}$.
	\item[Case 2.] $n=2m+1$.\\
	For $\ell = 1,\ldots, n-1$,  the $\ell$-th entry in $\alpha^T \jac(f)\atA$ is the $(i_{\ell}, j_{\ell})$-entry of  \[ \sum_{r=1}^{m} \alpha_r A^{2r-1} + \sum_{r=1}^{m} \alpha_{m+r} \widetilde{B}^{2r-1}.\] 
In this case let $p(x) = \sum_{r=1}^{m} \alpha_r x^{2r-1}$, and $q(x) = \sum_{r=1}^{m} \alpha_{m+r} x^{2r-1}$.
\end{enumerate} 

Let 
\[X= p(A) + \widetilde{q(B)},\]
where $\widetilde{q(B)}$ is constructed from the matrix $q(B)$ by padding it with a zero row and a zero column, as in Notation \ref{widetildenotation}. Note that since $X$ only involves the odd powers of skew-symmetric matrices, it is skew-symmetric. Also, note that the columns of $\jac(f)\atA$ correspond to the nonzero positions of $A$. Thus, in either case $\alpha^T\jac(f)\atA$ is the zero vector if and only if $X_{ij} = 0$ where $A_{ij}$ is nonzero. That is, $X$ satisfies $X \circ A=O$ and $X\circ I=O$. So, in order to show $\jac(A)$ is nonsingular we will show that $p(x)$ and $q(x)$ are zero polynomials. 

Observe that $[A,p(A)]=O$, hence $[A,X]= [A,\widetilde{q(B)}]$.
Also, note that since $A(n)=B$, $[A,\widetilde{q(B)}](n)=O$.
Thus, $[A,X](n)=O$, and by Lemma \ref{cent} we conclude that $X=O$. The rest of the proof is similar to that of Theorem 3.3 of \cite{ms}. $X=O$ implies that $p(A)=-\widetilde{q({B})}$. Let $Y:=p(A)=-\widetilde{q{({B})}}$, then $AY=Ap(A)$. 
We want to show that $Y=O$. Multiplying $A$ and $p(A)$ we get
\begin{align*}
Ap(A) &= -A(\widetilde{q(B)}) =
-\left[\begin{array}{c|c}
\mbox{\large{$B$}} & 
\begin{array}{c} \ast  \\ \vdots  \\ \ast  \end{array} \\ \hline
 \ast  \; \cdots \; \ast  & \ast  \end{array} 
\right]
\left[\begin{array}{c|c}
\mbox{\large{$q(B)$}} & 
\begin{array}{c} 0 \\ \vdots \\ 0 \end{array} \\ \hline
 0 \; \cdots \; 0 & 0 \end{array} 
\right]\\
&= \left[\begin{array}{c|c}
\mbox{\large{$-Bq(B)$}} & 
\begin{array}{c} 0 \\ \vdots \\ 0 \end{array} \\ \hline
 \ast  \; \cdots \; \ast  & 0 \end{array} 
\right],
\end{align*}
and 
\begin{align*}
p(A)A &= -(\widetilde{q(B)}) A =
-\left[\begin{array}{c|c}
\mbox{\large{$q(B)$}} & 
\begin{array}{c} 0 \\ \vdots \\ 0 \end{array} \\ \hline
 0 \; \cdots \; 0 & 0 \end{array} 
\right]
\left[\begin{array}{c|c}
\mbox{\large{$B$}} & 
\begin{array}{c} \ast  \\ \vdots \\ \ast  \end{array} \\ \hline
 \ast  \; \cdots \; \ast  & \ast  \end{array} 
\right]\\
&= \left[\begin{array}{c|c}
\mbox{\large{$-q(B)B$}} & 
\begin{array}{c}  \ast  \\ \vdots \\ \ast  \end{array} \\ \hline
0 \; \cdots \; 0 & 0 \end{array} 
\right].
\end{align*}

Since  $Ap(A)=p(A)A$, the last row of $Ap(A)$ is zero and the last column of $Ap(A)$ is zero. Thus, $Ap(A)=-\widetilde{q({B})}\widetilde{B}=p(A)\widetilde{B}$. That is, $AY=Y\widetilde{B}$. Hence, either $Y=O$, or $A$ and $\widetilde{B}$ have a common eigenvalue \cite[Lemma 1.1 (a)]{ms}. If $Y=O$ we are done. Otherwise, since $A$ and $B$ have no common eigenvalue, $A$ and $\widetilde{B}$ both have an eigenvalue $0$ of multiplicity one. 

Let $Y_j$ be a the $j$-th column of $Y$, and assume that it is nonzero. Observe that the last entry of $Y_j$ is $0$.   By Lemma 1.1 (b) of \cite{ms}, $Y_j$ is a generalized eigenvector of $A$ corresponding to $0$.  Since $A$ is skew-symmetric with distinct eigenvalues, $Y_j$ is an eigenvector of $A$ corresponding to $0$. This implies that the vector $Y_j(n)$ is a nonzero eigenvector of $B$ corresponding to $0$. This leads to the contradiction that $A$ and $B$ have a common eigenvalue. Thus $Y=O$.
  
  Since $Y=O$, $p(A)=O$ and $q(B)=O$. Note that  $p(x)$ is a polynomial of degree at most $n-1$. Since $A$ has $n$ distinct eigenvalues, its minimal polynomial has degree $n$. Thus $p(x)$ is the zero polynomial. Similarly, $q(x)$ is the zero polynomial. So $\jac(f)\atA$ is nonsingular. 
\end{proof}

\section{The $\lambda-\mu$ skew-symmetric SIEP for connected graphs with a NEB spanning tree}\label{sectionconnected}
We have shown that for any NEB tree $T$ at a vertex $v$, and sets of `generic' purely imaginary numbers, one can find a real skew-symmetric matrix $A$ with graph $T$ and the spectra given by the specified purely imaginary numbers. Furthermore, we showed that $A$ is a `generic' solution. Now, we are going to use the Implicit Function Theorem (see \cite{kran02}) to find a solution $\widehat{A}$ where $G(\widehat{A})$ is a supergraph of $T$.

\begin{theorem}\label{IFT}
Let $F: \mathbb{R}^{s+r} \rightarrow \mathbb{R}^s$ be a continuously differentiable
 function on an open subset $U$ of 
 $\mathbb{R}^{s+r}$
defined by $$F(x,y)=(F_1(x,y), F_2(x,y), \ldots, F_s(x,y)),$$
 where $x=(x_1, \ldots, x_s) \in \mathbb{R}^s$ and $y \in \mathbb{R}^r$. 
 Let $(a,b)$ be an element of $U$
 with $a\in \mathbb{R}^s$ and $b\in \mathbb{R}^r$,  and $c$ be an element of $\mathbb{R}^s$ such that 
  $F(a,b)=c$.
  If $$ \left[\frac{\partial F_i}{\partial x_{j}}\;{\rule[-3.6mm]{.1mm} {8mm}}_{(a,b)} \right] $$
  is nonsingular, then there exist an open neighborhood $V$  containing $a$
 and an open neighborhood $W$ containing $b$ 
 such that  $V \times W \subseteq U$ and 
   for each $y\in W$ there is an $x\in V$ with 
$F(x,y)=c$.
\end{theorem}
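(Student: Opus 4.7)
The plan is to reduce this statement to the Inverse Function Theorem by augmenting the given map so that it becomes a map between spaces of the same dimension. Concretely, I would define an auxiliary map $\Phi : U \to \mathbb{R}^{s+r}$ by
\[
\Phi(x,y) = (F(x,y),\, y).
\]
Then $\Phi(a,b) = (c,b)$, and the Jacobian of $\Phi$ at $(a,b)$ has the block form
\[
\jac(\Phi)\big|_{(a,b)} = \begin{bmatrix} \dfrac{\partial F}{\partial x}\big|_{(a,b)} & \dfrac{\partial F}{\partial y}\big|_{(a,b)} \\[4pt] 0 & I_r \end{bmatrix},
\]
which is block upper triangular. Its determinant equals $\det\!\bigl[\partial F_i/\partial x_j\big|_{(a,b)}\bigr]$, and this is nonzero by hypothesis; hence $\jac(\Phi)\big|_{(a,b)}$ is nonsingular.

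Next, I would invoke the Inverse Function Theorem for $\Phi$: there exist open neighborhoods $U_0 \subseteq U$ of $(a,b)$ and $\widetilde U$ of $(c,b)$ such that $\Phi : U_0 \to \widetilde U$ is a $C^1$ diffeomorphism with $C^1$ inverse $\Psi : \widetilde U \to U_0$. Because the second coordinate of $\Phi$ is simply $y$, the inverse must preserve the $y$-coordinate, so $\Psi$ has the form $\Psi(z,y) = (\psi(z,y),\, y)$ for some continuously differentiable $\psi$.

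Finally, I would extract the desired solution function. Choose an open box $V \times W \subseteq U_0$ containing $(a,b)$ such that $\{c\} \times W \subseteq \widetilde U$, which is possible by continuity since $\Phi(a,b) = (c,b)$ lies in $\widetilde U$. For each $y \in W$, set $x := \psi(c,y)$. Then $(x,y) = \Psi(c,y) \in U_0 \subseteq V \times W$, so $x \in V$, and applying $\Phi$ gives
\[
(F(x,y),\, y) = \Phi(x,y) = \Phi(\Psi(c,y)) = (c,y),
\]
which yields $F(x,y) = c$, as required.

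The main obstacle in this plan is of course the Inverse Function Theorem itself, which is the nontrivial analytic input; once that is accepted (standard proofs use either a contraction mapping argument on $T_y(x) = x - L^{-1}(F(x,y)-c)$ with $L = \partial F/\partial x|_{(a,b)}$, or Newton iteration), the reduction above is entirely algebraic and dimensional. Since the paper cites \cite{kran02} for Theorem~\ref{IFT}, the intended proof is simply to reference the standard literature, and the above sketch records how one bridges the two classical statements.
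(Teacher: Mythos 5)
The paper offers no proof of this theorem: it is stated verbatim as the Implicit Function Theorem and attributed to the literature (the sentence preceding it reads ``we are going to use the Implicit Function Theorem (see \cite{kran02})''), so there is no in-paper argument to compare against. Your reduction to the Inverse Function Theorem via the augmented map $\Phi(x,y)=(F(x,y),y)$ is the standard textbook derivation, and the block-triangular Jacobian computation is correct, so the strategy is sound.

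There is, however, one genuine slip in the final extraction step. You write ``$(x,y)=\Psi(c,y)\in U_0\subseteq V\times W$, so $x\in V$,'' but the inclusion runs the other way: you chose $V\times W\subseteq U_0$, so membership in $U_0$ gives no information about membership in $V\times W$. To actually land $x=\psi(c,y)$ inside $V$, you need a continuity argument on $\Psi$ rather than a set inclusion: since $\Psi$ is continuous and $\Psi(c,b)=(a,b)$, first fix an open box $V\times W_0\subseteq U_0$ around $(a,b)$, then shrink $W_0$ to an open $W\ni b$ small enough that $\{c\}\times W\subseteq\widetilde U$ and $\Psi(\{c\}\times W)\subseteq V\times W_0$. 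With that ordering of choices, the conclusion $x\in V$ follows and the rest of your argument goes through unchanged.
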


\begin{theorem}\label{mainconnected}
Let $G$ be a connected graph on $n$ vertices $1, 2,\ldots,n$ with $n\geq 2$. Let \[\lambda_1< \mu_1< \lambda_2<\cdots <\mu_{n-1}< \lambda_n\] be $2n-1$ real numbers such that \[\lambda_j=-\lambda_{n+1-j},\] for all  $j=1,\ldots,n$, and \[\mu_k=-\mu_{n-k},\] for all  $k=1,\ldots,n-1$. If $G$ has a spanning NEB tree $T$ at a vertex $v$, then there exists a skew-symmetric matrix $A$ in $S^-(T)$ with eigenvalues $\i \lambda_1,\i \lambda_2, \ldots, \i \lambda_n$ such that the eigenvalues of $A(v)$ are $\i \mu_1,\i \mu_2, \ldots, \i \mu_{n-1}$.
\end{theorem}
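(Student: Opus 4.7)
The plan is to apply the Implicit Function Theorem (Theorem \ref{IFT}) to extend the tree-case result from Section \ref{sectiontree} to supergraphs, using the nonsingular Jacobian supplied by Theorem \ref{JAC}. Without loss of generality, relabel the vertices so that $v=n$. Let $e_1, \dots, e_{n-1}$ be the edges of the spanning NEB tree $T$, and let $e_n, \dots, e_{n-1+r}$ denote the edges of $G$ not in $T$. Introduce variables $x=(x_1, \dots, x_{n-1})$ for the edges of $T$ and $y=(y_1, \dots, y_r)$ for the remaining edges, and define $M(x,y)$ to be the skew-symmetric matrix with $x_k$ in the upper-triangular position of $e_k$ and $y_\ell$ in the upper-triangular position of $e_{n-1+\ell}$; set $N(x,y) := M(x,y)(n)$.

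Extending \eqref{eff}, define $F: \mathbb{R}^{n-1+r} \to \mathbb{R}^{n-1}$ by collecting the even traces $\tr M(x,y)^{2k}/(4k)$ and $\tr N(x,y)^{2k}/(4k)$ (the odd traces vanish identically by skew-symmetry). By Theorem \ref{main}, there exists $A_0 \in S^-(T)$ realizing the prescribed spectra; let $a \in \mathbb{R}^{n-1}$ be the vector of its above-diagonal nonzero entries, so that $M(a,0)=A_0$ and $N(a,0)=A_0(n)$. By Corollary \ref{NEB2Duarte}, $A_0$ has the Duarte property with respect to $n$. Let $c := F(a,0)$, the vector of even-trace quantities computed from $A_0$ and $A_0(n)$.

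The key observation is that at $(a,0)$ the matrices $M$ and $N$ reduce to the tree-parametrization used in Section \ref{sectionjacobian}, so the $(n-1)\times(n-1)$ partial Jacobian $\bigl[\partial F_i / \partial x_j \bigr]_{(a,0)}$ coincides exactly with $\jac(f)\atA$ evaluated at $A_0$. By Theorem \ref{JAC} this matrix is nonsingular. Theorem \ref{IFT} then yields open neighborhoods $V\ni a$ and $W \ni 0$ and a continuous map $x \colon W \to V$ such that $F(x(y),y)=c$ for every $y \in W$. Since every entry of $a$ is nonzero and $x$ is continuous, shrinking $W$ if necessary we may assume every entry of $x(y)$ remains nonzero throughout $W$. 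Choose any $y^* \in W$ whose coordinates are all nonzero (possible because $W$ is an open neighborhood of the origin in $\mathbb{R}^r$), and set $A := M(x(y^*), y^*)$.

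By construction $A$ is skew-symmetric and its above-diagonal nonzero positions are precisely the edges of $G$, so $G(A)=G$. From $F(x(y^*),y^*)=c$ together with Newton's identities, and using that the odd-indexed coefficients of the characteristic polynomials of $A$ and $A(n)$ vanish by skew-symmetry, we conclude $C_A(t)=C_{A_0}(t)$ and $C_{A(n)}(t)=C_{A_0(n)}(t)$. Hence $\sigma(A)=\{\i\lambda_1, \dots, \i\lambda_n\}$ and $\sigma(A(n))=\{\i\mu_1, \dots, \i\mu_{n-1}\}$, completing the proof. The subtlest point is the Jacobian identification in the third paragraph: the extra $y$-variables enlarge the domain of $F$, but at $y=0$ they contribute nothing to the $x$-partials, which is why the tree-case Jacobian result of Theorem \ref{JAC} transfers without change.
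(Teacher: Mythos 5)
Your proof is correct and follows essentially the same route as the paper: apply Theorem \ref{main} to get a Duarte solution on the spanning tree, invoke Theorem \ref{JAC} for nonsingularity of the Jacobian, and use the Implicit Function Theorem to perturb into the supergraph. The only cosmetic differences are that the paper applies the IFT to the coefficient map $\hat{g}$ rather than to the trace map (so it does not need Newton's identities at the end), and it guarantees nonvanishing $x$-entries by pre-choosing the open set $U$ to avoid zeros in the first $n-1$ coordinates rather than by invoking continuity of the implicit function, which the paper's stated Theorem \ref{IFT} does not explicitly provide.
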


\begin{proof}
Without loss of generality assume that $v=n$.  By Theorem  \ref{main} there is an $A\in S^-(T)$ such that $A$ has eigenvalues $\i\lambda_1, \dots, \i\lambda_n$, $A(n)$ has eigenvalues  $\i\mu_1, \dots, \i\mu_{n-1}$, and $A$ has the Duarte-property with respect to $n$. By Theorem \ref{JAC}, the Jacobian of the $f$ defined in (\ref{eff}) evaluated at $A$ is nonsingular. Thus, the Jacobian  matrix of the function $g$ defined by (\ref{gee}) at $A$ is nonsingular.

The rest of the proof is similar to that of Theorem 4.2 of \cite{ms}. Assume that $G$ has $r$ edges not in $T$ and let $y_1, \ldots, y_r$ be $r$ new variables other than $x_1, \ldots, x_{n-1}$. We can extend the function $g: \mathbb{R}^{n-1} \rightarrow \mathbb{R}^{n-1}$ to a function $\hat{g}: \mathbb{R}^{(n-1)+r }\rightarrow \mathbb{R}^{n-1}$  by replacing each pair of entries of $M$ (and $N$) corresponding to an edge of $G$ not in $T$ by one of the $y_i$'s. Let $\hat{g}(x,y)$ be the vector of nonleading coefficients of the characteristic polynomials of $M$ and $N$. Let $\hat{g}(x,y)\atA = (c,d)$.

Since each of the $n-1$ entries of $A$ corresponding to the variable $x_j$ is nonzero, there is an open neighborhood $U$ of $(a_{i_1,j_1},\ldots,a_{i_{n-1},j_{n-1}}, 0, \ldots, 0)$ each of whose elements has no zeros in its first $n-1$ entries. By Theorem \ref{IFT},  there is an open neighborhood $V$ of $(a_{i_1,j_1},\ldots,a_{i_{n-1},j_{n-1}})$ and an open neighborhood $W$ of $(0,0,\ldots,0)$ such that $V\times W \subseteq U$ and for each $y \in W$ there is an $x\in V$ such that $\hat{g}(x,y)=(c,d)$. Take $y$ to be a vector in $W$ with no zero entries. Then, the graph of the matrix obtained from this choice of $x$ and $y$, $\widehat{A}$, is $G$, and also $\hat{g}(x,y)=(c,d)$, that is, the $\i\lambda_j$'s are the eigenvalues of $\widehat{A}$ and the $\i\mu_j$'s are the eigenvalues of $\widehat{A}(n)$. 
\end{proof}

Given $\lambda_1< \lambda_2 < \cdots < \lambda_n$ it is easy to find 
$\mu_1, \mu_2, \ldots, \mu_{n-1}$ such that (\ref{cauchysymm}) holds. Hence Theorem \ref{mainconnected} and Observation \ref{matching} immediately imply the following corollary.

\begin{corollary}
Let $G$ be  a connected graph on $n$ vertices and $\lambda_1$, $\lambda_2$, \ldots, $\lambda_n$ 
distinct real numbers such that \[\lambda_j=-\lambda_{n+1-j},\] for all  $j=1,\ldots,n$. If $G$ has a spanning tree which is NEB at a vertex, then $\match(G)=\lfloor \frac{n}{2}\rfloor$ and there exists a matrix $A\in S^-(G)$ with eigenvalues $\i\lambda_1, \ldots, \i\lambda_n$.
\end{corollary}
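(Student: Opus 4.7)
The plan is to observe that the corollary is a direct packaging of Theorem \ref{mainconnected} and Observation \ref{matching}, so the only real work is verifying the hypotheses of the former and correctly pushing the matching bound from $T$ up to $G$.

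First I would settle the matching number. Since $T$ is a spanning subgraph of $G$, every matching in $T$ is also a matching in $G$, which gives $\match(G)\ge\match(T)$. The NEB hypothesis on $T$, combined with Observation \ref{matching}, forces $\match(T)=\lfloor n/2\rfloor$. The reverse bound $\match(G)\le\lfloor n/2\rfloor$ is immediate because a matching consists of vertex-disjoint edges in a graph on $n$ vertices. Therefore $\match(G)=\lfloor n/2\rfloor$.

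Next I would manufacture a companion sequence $\mu_1,\ldots,\mu_{n-1}$ that meets the hypotheses of Theorem \ref{mainconnected}. After relabeling we may assume $\lambda_1<\lambda_2<\cdots<\lambda_n$. I set $\mu_k=\tfrac{1}{2}(\lambda_k+\lambda_{k+1})$ for $k=1,\ldots,n-1$. Then $\lambda_k<\mu_k<\lambda_{k+1}$ by construction, so the strict interlacing $\lambda_1<\mu_1<\lambda_2<\cdots<\mu_{n-1}<\lambda_n$ holds. The required antisymmetry $\mu_k=-\mu_{n-k}$ is a one-line check from $\lambda_j=-\lambda_{n+1-j}$:
\[
\mu_{n-k}=\frac{\lambda_{n-k}+\lambda_{n-k+1}}{2}=\frac{-\lambda_{k+1}+(-\lambda_k)}{2}=-\mu_k.
\]
Applying Theorem \ref{mainconnected} to $G$ with these $\mu_k$'s and the NEB vertex $v$ of $T$ produces a matrix $\widehat A\in S^-(G)$ with eigenvalues $\i\lambda_1,\ldots,\i\lambda_n$ (and, as a bonus, $\widehat A(v)$ has spectrum $\{\i\mu_1,\ldots,\i\mu_{n-1}\}$, though this extra information is not part of the claim).

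There is essentially no obstacle here: both ingredients (Observation \ref{matching} and Theorem \ref{mainconnected}) have already done the hard work, and the corollary's role is only to exhibit a valid $\mu$-sequence so Theorem \ref{mainconnected} can be invoked. The single short calculation above, verifying that the midpoint choice respects both the interlacing and the skew-symmetry, is all that separates the corollary from its two parents.
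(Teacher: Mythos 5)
Your proof is correct and follows exactly the route the paper intends: it invokes Observation~\ref{matching} together with the fact that a spanning subgraph cannot have larger matching number to pin down $\match(G)=\lfloor n/2\rfloor$, constructs an explicit interlacing, antisymmetric sequence $\mu_k=\tfrac12(\lambda_k+\lambda_{k+1})$, and then applies Theorem~\ref{mainconnected}. The paper leaves these routine verifications to the reader; you have simply filled them in.
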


\begin{example}\label{lastexample}
We want to find a matrix whose graph is $C_4$, the cycle of length $4$, with eigenvalues $\pm\i, \pm 2 \i$, and its eigenvalues after deleting the forth row and columns are $0, \pm 1.5 \i$. Consider the matrix $A=[a_{ij}]$ constructed in Example \ref{exampleP4} (mentioned below). 
\[A \simeq \left[\begin{array}{cccc}
0 & 1.206045 & 0 & 0 \\
-1.206045 & 0 & 0.8918826 & 0 \\
0 & -0.8918826 & 0 & 1.658312 \\
0 & 0 & -1.658312 & 0
\end{array}\right].\]

Construct the matrices $M$ and $N$ as is Example \ref{exampleP4MN}:
\[M=\left[\begin{array}{cccc}
0&x_1&0&0\\
-x_1&0&x_2&0\\
0&-x_2&0&x_3\\
0&0&-x_3&0
\end{array}\right], \qquad N=M(4)=\left[\begin{array}{ccc}
0&x_1&0\\
-x_1&0&x_2\\
0&-x_2&0\\
\end{array}\right].\]

And set of the function $f$ as in Example \ref{exampleP4MNEff}:
\[f(x_1,x_2,x_3)= \left(-\frac{x_{1}^{2} +   x_{2}^{2} + x_{3}^{2}}{2}, \frac{(x_{1}^{4} + x_{2}^{4} + x_{3}^{4}) + 2 x_{2}^{2} ( x_{1}^{2} + x_{3}^{2} )}{4}, -\frac{x_1^2 + x_2^2}{2} \right).\]

Note that \[f\atA = \left(\frac{\tr M^2}{4}, \frac{\tr M^4}{8}, \frac{\tr N^2}{4}\right)\atA \simeq (-2.5, 4.25, -1.125),\] and

\[ \jac(f) = \left[\begin{array}{ccc}
 -x_1 &  -x_2 &  -x_3 \\
x_1^3 + x_1 x_2^2 & x_1^2 x_2 + x_2^3 + x_2 x_3^2 & x_2^2 x_3 + x_3^3 \\
 -x_1 & -x_2 & 0
\end{array}\right]. \]

Also note that $\det(\jac(f)\atA) = - x_1 x_2 x_3^3 \atA \simeq 4.9053 \neq 0$. Hence by the Implicit Function Theorem, for small perturbations of $a_{14}$ from $0$ to $\varepsilon$, there are $\widehat{a}_{12},\widehat{a}_{2,3},\widehat{a}_{3,4}$ such that \[\left(\frac{\tr \widehat{A}^2}{4}, \frac{\tr \widehat{A}^4}{8}, \frac{\tr \widehat{A}(1)^2}{4}\right) \atA \simeq (-2.5, 4.25, -1.125).\]
For example if $\varepsilon = 0.1$, then $\widehat{a}_{12} \simeq 1.257633,\widehat{a}_{2,3} \simeq 0.8175322,\widehat{a}_{3,4} \simeq 1.655294$, and 

\begin{minipage}[b]{0.45\textwidth}
\[ \widehat{A} \simeq \left[\begin{array}{cccc}
0 & 1.257633 & 0 & 0.1 \\
-1.257633 & 0 & 0.8175322 & 0 \\
0 & -0.8175322 & 0 & 1.655294 \\
-0.1 & 0 & -1.655294 & 0
\end{array}\right] \]
\end{minipage} \begin{minipage}[c]{0.45\textwidth}
\begin{flushright}
\begin{tikzpicture}[scale=.8,colorstyle/.style={circle, draw=black!100,fill=black!100, thick, inner sep=0pt, minimum size=1.2 mm}]
\node (1) at (1,1)[colorstyle,label=left:$1$]{};
\node (2) at (1,0)[colorstyle,label=left:$2$]{};
\node (3) at (2,0)[colorstyle,label=right:$3$]{};
\node (4) at (2,1)[colorstyle,label=right:$4$]{};
\draw [thick] (1)--(2)--(3)--(4)--(1);
\node () at (1.5,-1) {$C_4$};
\end{tikzpicture}
\end{flushright}
\end{minipage}\\

It is easy to verify that the eigenvalues of $\widehat{A}$ are approximately $\pm\i, \pm 2 \i$, and the eigenvalues of $\widehat{A}(4)$ are approximately $0, \pm 1.5 \i$. Furthermore the graph of $\widehat{A}$ is a cycle of length $4$.
\end{example}

\section*{Acknowledgment}
We would like to thank our academic advisor Bryan Shader for inspirations, fruitful discussions, and coining the term ``Nearly Even Branching (NEB)''. We also would like to thank the anonymous referee for the quick review and valuable suggestions.

\vfill

\end{document}